\newcommand{\old}[1]{{\red #1}}
\newtheorem{theorem}{Theorem}[section]
\newtheorem{corollary}[theorem]{Corollary}
\newtheorem{lemma}[theorem]{Lemma}
\newtheorem{proposition}[theorem]{Proposition}
\newtheorem{definition-proposition}[theorem]{Definition-Proposition}
\newtheorem{question}[theorem]{Question}
\theoremstyle{definition}
\newtheorem{definition}[theorem]{Definition}
\newtheorem{example}[theorem]{Example}
\newcommand{\KG}{\mathop{{\rm K}_0}\nolimits}
\newcommand{\Cl}{\operatorname{Cl}\nolimits}
\newcommand{\CH}{\operatorname{CH}\nolimits}
\renewcommand{\NG}{\operatorname{NG}\nolimits}
\newcommand{\supp}{\operatorname{supp}\nolimits}
\newcommand{\sCM}{\operatorname{\Omega{CM}}}
\newcommand{\mm}{{\mathfrak{m}}}
\newcommand{\pp}{{\mathfrak{p}}}
\newcommand{\CC}{\mathscr{C}}
\newcommand{\DD}{\mathscr{D}}
\newcommand{\Z}{\mathbb{Z}}
\newcommand{\Q}{\mathbb{Q}}
\newcommand{\add}{\operatorname{add}\nolimits}
\newcommand{\fl}{\operatorname{f.l.}\nolimits}
\newcommand{\depth}{\operatorname{depth}\nolimits}
\newcommand{\pd}{\operatorname{proj.dim}\nolimits}
\renewcommand{\mod}{\operatorname{mod}\nolimits}
\newcommand{\Ext}{\operatorname{Ext}\nolimits}
\newcommand{\Hom}{\operatorname{Hom}\nolimits}
\newcommand{\End}{\operatorname{End}\nolimits}
\newcommand{\gl}{\operatorname{gl.dim}\nolimits}
\newcommand{\RHom}{\mathbf{R}\strut\kern-.2em\operatorname{Hom}\nolimits}
\newcommand{\Cok}{\operatorname{Cok}\nolimits}
\newcommand{\Proj}{\operatorname{Proj}\nolimits}
\DeclareMathOperator{\Spec}{Spec}
\newcommand{\ses}[3]{0 \to {#1} \to {#2} \to {#3} \to 0}
\DeclareMathOperator{\C}{\mathbb C}
\DeclareMathOperator{\Sing}{Sing}
\begin{document}

\title{Non-commutative resolutions and Grothendieck groups}
\author{Hailong Dao, Osamu Iyama, Ryo Takahashi and Charles Vial}
\address{H. Dao: Department of Mathematics, University of Kansas, Lawrence, KS 66045-7523, USA}
\email{hdao@math.ku.edu}
\urladdr{http://www.math.ku.edu/~hdao/}
\address{O. Iyama: Graduate School of Mathematics, Nagoya University, Furocho, Chikusaku, Nagoya 464-8602, Japan}
\email{iyama@math.nagoya-u.ac.jp}
\urladdr{http://www.math.nagoya-u.ac.jp/~iyama/}
\address{R. Takahashi: Graduate School of Mathematics, Nagoya University, Furocho, Chikusaku, Nagoya 464-8602, Japan/Department of Mathematics, University of Nebraska, Lincoln, NE 68588-0130, USA}
\email{takahashi@math.nagoya-u.ac.jp}
\urladdr{http://www.math.nagoya-u.ac.jp/~takahashi/}
\address{C. Vial: DPMMS, University of Cambridge, Wilberforce Road,  Cambridge, CB3 0WB, UK}
\email{c.vial@dpmms.cam.ac.uk}
\urladdr{http://www.dpmms.cam.ac.uk/~cv248/}
\thanks{2010 {\em Mathematics Subject Classification.} 13D15, 14B05, 14E15, 16G30, 18G20}
\thanks{{\em Key words and phrases.} Non-commutative resolutions, Grothendieck groups, rational singularities.}
\thanks{The first author was partially supported by NSF grants DMS 0834050 and DMS 1104017. 
The second author was partially supported by JSPS Grant-in-Aid for Scientific Research 24340004, 23540045, 20244001 and 22224001.
The third author was partially supported by JSPS Grant-in-Aid for Young Scientists (B) 22740008 and by JSPS Postdoctoral Fellowships for Research Abroad. The fourth  {author} was supported by EPSRC Postdoctoral Fellowship EP/H028870/1.}
\begin{abstract}
Let $R$ be a noetherian normal domain. We investigate when $R$ admits a faithful module whose endomorphism ring has finite global dimension. This can be viewed as a non-commutative desingularization of $\Spec(R)$. We show that the existence of such modules forces stringent conditions on the Grothendieck group of finitely generated modules over $R$. In some cases those conditions are enough to imply that $\Spec(R)$ has only rational singularities. 
\end{abstract}
\maketitle

{\section{Introduction}}

The use of non-commutative algebras with finite global dimension was
initiated by M. Auslander in representation theory of Cohen-Macaulay
rings, or more generally, orders \cite{A1,A2,Y}.  He introduced two
important classes of non-commutative algebras with finite global
dimension called {Auslander algebras} and {non-singular
  orders}.  Auslander algebras are defined as the endomorphism
algebras of additive generators in the category of Cohen-Macaulay
modules over representation-finite orders. Then representation theory
of representation-finite orders is encoded in the structure of their
Auslander algebras, and this picture was the starting point of
Auslander-Reiten theory. On the other hand, the representation theory of non-singular
orders is most basic since all Cohen-Macaulay modules are
projective.

Recently, the study of such algebras have found striking applications
in algebraic geometry. Perhaps the most well-known example is Van den
Bergh's definition of {\it non-commutative crepant resolution},
usually abbreviated by NCCR (\cite{VdB}). We recall Van den Bergh's
definition, in a slightly more general setting: { A reflexive module
  $M$ over a commutative noetherian normal domain $R$ is said to give
  an NCCR of $\Spec(R)$ if $\Lambda=\End_R(M)$ is a non-singular
  $R$-order (i.e. $\Lambda_\pp$ is a maximal Cohen-Macaulay
  $R_\pp$-module and $\gl\Lambda_\pp=\dim R_\pp$ for any
  $\pp\in\Spec(R)$).  }

These non-commutative objects provide a particularly pleasant
explanation of the Bondal-Orlov conjecture on the derived equivalence of 
threefolds related by a flop. Van den Bergh's
work has quickly generated a sizable body of research, see for example
the recent survey \cite{L}.

In this note we study a weaker notion which is called a
\emph{non-commutative resolution} (\emph{NCR}). \\

\begin{definition}\label{ncr} {A finitely generated module $M$ over a commutative
    noetherian ring $R$} is said to give a NCR of $\Spec(R)$ (or just $R$ by abuse of notation) if $M$
  is faithful and $\End_R(M)$ has finite global dimension.
\end{definition}

The assumption that $M$ is faithful in Definition \ref{ncr} is reasonable since,
for example, {any simple $R$-module $k$} \old{} gives  $k=\End_R(k)$ which  always  has finite global dimension.
Basic examples of NCRs appeared in representation theory: Auslander algebras \cite{A1},
higher Auslander algebras \cite{I2,IW2} and NCCRs \cite{VdB}. It is known that NCRs exist when $R$ is artinian, or reduced and one-dimensional (see \cite{L}). 

We shall try to give information on the following
\begin{question}
When does $R$ have a NCR?
\end{question}

We give necessary conditions which 
focus on the {\it Grothendieck group} of the category of finitely
generated modules over $R$ and its subcategories (see 
\ref{application1}, \ref{application2}, \ref{main}). Our results show
that the existence of NCRs still implies strong constraints on the
singularities of $R$. For example, we prove that a standard graded
Cohen-Macaulay algebra $R$ over $\C$ with only rational singularities
outside the irrelevant ideal has a NCR only if $R$ has rational
singularities (Theorem \ref{S3}). Our proofs utilize some non-trivial
results from algebraic K-theory.  For surface singularities over an
algebraically closed field, we observe that the existence of NCRs
actually characterizes rational singularities (Corollary \ref{rat}).
This adds a new member to a long list of interesting equivalent
conditions for rationality of surface singularities.


\section{NCRs and Grothendieck groups}

Throughout this note, we denote by $R$ a commutative noetherian ring, and by $\mod R$ the category of finitely generated $R$-modules. We denote by $\KG(R)$ the Grothendieck group of the abelian category $\mod R$.
When $R$ is a normal domain, we denote by $\Cl(R)$ the class group of $R$.
Our main technical result (Theorem \ref{main}) given later in this section relates
the existence of NCRs and certain finiteness properties of Grothendieck groups and class groups.
The first application is the following,
which deals with NCRs  over a normal domain.

\begin{corollary}\label{application1}
Let $R$ be a semilocal normal domain. If $R$ has a NCR, then $\Cl(R)$ is a finitely generated abelian group.
\end{corollary}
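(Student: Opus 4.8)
The plan is to realise $\Cl(R)$ as a subquotient of $\KG(R)$ and then to control that subquotient. For a normal domain the filtration of $\KG(R)$ by codimension of support satisfies $\gr^0\cong\Z$ (generic rank), and there is a canonical isomorphism $\gr^1\KG(R)\cong\Cl(R)$, under which the class of a module $N$ maps to the first Chern class of its reflexive hull. Since $R$ is normal, $\Sing R$ has codimension at least $2$, so every module supported on $\Sing R$ lies in $F^2\KG(R)$ and is invisible in $\gr^1$. Hence it suffices to produce \emph{finitely many} modules whose classes generate $\KG(R)$ modulo $F^2\KG(R)$: their images will then generate $\gr^1\KG(R)\cong\Cl(R)$.

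Write $\Lambda=\End_R(M)$, so that $\gl\Lambda=n<\infty$. Because $R$ is semilocal and $\Lambda$ is module-finite over $R$, the ring $\Lambda$ is semilocal and thus has only finitely many indecomposable projectives; together with $\gl\Lambda<\infty$ this makes $\KG(\mod\Lambda)$ a finitely generated free abelian group. I would transfer this finiteness to $R$ through the $(\Lambda,R)$-bimodule $M$: since $\gl\Lambda<\infty$, the functor $-\Lotimes_\Lambda M$ has finite homological dimension, so $[X]\mapsto\sum_i(-1)^i[\operatorname{Tor}^\Lambda_i(X,M)]$ defines a homomorphism $\phi\colon\KG(\mod\Lambda^{\op})\to\KG(R)$. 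Evaluating $\phi$ on the projective $e\Lambda$ attached to a primitive idempotent $e$ gives $[e\Lambda\otimes_\Lambda M]=[eM]$, so $\Image\phi=\langle[M_1],\dots,[M_s]\rangle$ is the subgroup generated by the classes of the indecomposable direct summands $M_1,\dots,M_s$ of $M$, which is finitely generated.

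It remains to show that $[R]$ together with $[M_1],\dots,[M_s]$ generate $\KG(R)$ modulo $F^2\KG(R)$, and this is the crux. Passing to the regular locus $U=\Spec R\setminus\Sing R$ (whose complement has codimension at least $2$, so that $U$ carries all of $\gr^0$ and $\gr^1$), the claim becomes that $\add(M|_U)$ generates $\KG(\coh U)$ up to rank and codimension-$2$ cycles. Here the finiteness of $\gl\Lambda$ is essential: for a coherent sheaf on $U$ one resolves the associated $\Lambda_U$-module by finitely many projectives and pushes the resulting finite complex back through $M|_U$, thereby expressing its class in terms of $\add(M|_U)$; faithfulness of $M$, which forces $\Lambda\otimes_R\operatorname{Frac}(R)\cong M_r(\operatorname{Frac}(R))$ with $r=\operatorname{rank}M>0$, handles the generic rank and is the reason the rank-one class $[R]$ must be adjoined. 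Granting this generation statement, $\Cl(R)\cong\gr^1\KG(R)$ is a quotient of the finitely generated group $\langle[R],[M_1],\dots,[M_s]\rangle$, hence a finitely generated abelian group.

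The main obstacle I expect is precisely this generation on $U$: away from the generic point there is no formal reason for the summands of $M$ to generate, and making the resolution-by-$\add(M|_U)$ argument work — equivalently, bounding the cokernel of $\phi$ in codimension at most $1$ — is exactly the kind of finiteness encoded in the forthcoming Theorem \ref{main}.
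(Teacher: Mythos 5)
Your overall strategy coincides with the paper's: identify $\Cl(R)$ (together with a copy of $\Z$) with $\KG(R)$ modulo classes of modules supported in codimension at least two --- this is Chan's result, Proposition \ref{Grothendieck to class} --- and then show that finitely many classes coming from $E=\End_R(M)$ (your $\Lambda$) generate that quotient, using that $\KG(E)$ is finitely generated because $R$ is semilocal and $\gl E<\infty$ (the paper's final lemma in Section 2, via Fuller--Shutters). But the step you yourself flag as ``the crux'' --- that $[R],[M_1],\dots,[M_s]$ generate $\KG(R)$ modulo $F^2$ --- is exactly the content of Theorem \ref{main}, i.e.\ it is the entire mathematical substance of the corollary, and you do not prove it: you write ``granting this generation statement'' and end by predicting that it is ``the kind of finiteness encoded in the forthcoming Theorem \ref{main}''. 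That is a genuine gap, not a completed argument.

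The paper closes it as follows. Let $\CC_M$ be the Serre subcategory of modules supported on the non-generating locus $\NG(M)=\supp\Cok(a_M)$, where $a_M:M\otimes_EM^*\to R$. The functor $F=\Hom_E(M^*,-):\mod E\to(\mod R)/\CC_M$ is exact because $M^*_\pp$ is $E_\pp$-projective whenever $M_\pp$ is a generator, so $\Ext^1_E(M^*,-)$ is supported on $\NG(M)$; and it is dense because $F(\Hom_R(M,X))\cong\Hom_R(M\otimes_EM^*,X)$ and $a_M$ is an isomorphism off $\NG(M)$, so $X\to F(\Hom_R(M,X))$ has kernel and cokernel in $\CC_M$. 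Hence $\KG(E)\to\KG(R)/\langle\CC_M\rangle$ is surjective; this is the rigorous form of your ``resolve the $\Lambda$-module by finitely many projectives and push back through $M$''. The hypothesis that makes this feed into Chan's isomorphism is $\CC_M\subset\DD$, i.e.\ that $\NG(M)$ has codimension at least two. This you do not establish: you invoke faithfulness only at the generic point and substitute $\Sing(R)$ for the relevant locus, but a faithful module need not be a generator, nor locally free, on the regular locus. The correct argument is that at every prime $\pp$ of height at most one, $R_\pp$ is a field or a DVR and the faithful module $M_\pp$ therefore has a nonzero free direct summand, so $M_\pp$ is a generator of $\mod R_\pp$ and $\pp\notin\NG(M)$.
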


A typical example of a ring which has an infinitely generated class group is
$R:=\C[[x,y,z]]/(x^3+y^3+z^3)$, see \ref{Chowfacts}.
In particular, $R$ has no faithful module giving rise to a NCR by the above theorem.
This  {gives} an answer to a question by Burban \cite{B}.

The second application of our main result is the following,
which deals with NCRs given by modules which are  locally {generators} on the outside of a closed set of {dimension at most one}.

\begin{corollary}\label{application2}
Let {$R$} be a {semilocal} ring. If $M$ is an $R$-module which is locally a generator outside a closed subscheme of  $\Spec(R)$ of {dimension at most one} and gives {a} NCR, then $\KG(R)$ is a finitely generated abelian group.
\end{corollary}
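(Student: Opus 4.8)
The plan is to deduce Corollary \ref{application2} from the main technical result (Theorem \ref{main}), which relates the existence of an NCR to finiteness of Grothendieck groups. The key idea is that the hypotheses — $M$ is a generator locally away from a closed subscheme $Z$ of dimension at most one, and $\End_R(M)$ has finite global dimension — should allow us to control $\KG(R)$ by splitting off the contribution supported on $Z$.

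First I would set $\Lambda = \End_R(M)$ and consider the functor $\Hom_R(M, -)\colon \mod R \to \mod \Lambda$, or dually the comparison between $\KG(R)$ and $\KG(\Lambda)$. Since $\gl\Lambda < \infty$, the group $\KG(\Lambda)$ coincides with $\KG$ of the category of finitely generated projective $\Lambda$-modules, and this is controlled by the main theorem. The condition that $M$ is \emph{locally a generator} on the complement $U = \Spec(R) \setminus Z$ means that, after localizing at any prime $\pp \notin Z$, the functor $\Hom_{R_\pp}(M_\pp, -)$ is an equivalence onto $\mod \Lambda_\pp$ (a generator gives a Morita-type equivalence locally). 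Consequently the induced map on Grothendieck groups is an isomorphism over $U$, and the only obstruction to finiteness of $\KG(R)$ comes from the modules supported on $Z$.

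The next step is to invoke the localization sequence in $K$-theory: if $\KK_Z(R)$ denotes the Grothendieck group of finitely generated $R$-modules supported on $Z$, there is an exact sequence
\begin{equation*}
\KK_Z(R) \longrightarrow \KG(R) \longrightarrow \KG(U) \longrightarrow 0.
\end{equation*}
Because $\dim Z \le 1$, the group $\KK_Z(R)$ is built up from $\KG$ of the (finitely many, by semilocality) residue fields and one-dimensional pieces along $Z$, so it is finitely generated; here the semilocal hypothesis is essential to bound the number of points and components in low dimension. Meanwhile $\KG(U) \cong \KG(\Lambda_U)$ is finitely generated by the main theorem applied through the finite global dimension of $\Lambda$. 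Combining these, $\KG(R)$ sits in a sequence with finitely generated outer terms and is therefore itself finitely generated.

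The main obstacle I expect is making precise the passage from ``locally a generator on $U$'' to an actual identification of Grothendieck groups over $U$, since $M$ need not be a genuine generator globally and the Morita equivalence only holds stalkwise. I would handle this by checking the statement prime-by-prime and assembling via the localization sequence rather than attempting a global equivalence of categories; the finiteness of $\KK_Z(R)$ then hinges on the structural fact that Grothendieck groups of modules supported on a semilocal scheme of dimension at most one are finitely generated, which should follow from dévissage together with the corresponding statement for the class groups of the one-dimensional components furnished by Corollary \ref{application1}. The delicate bookkeeping is ensuring that the torsion and rank contributions from $Z$ are genuinely finitely generated and not merely finitely generated modulo some infinitely generated piece.
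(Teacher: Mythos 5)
Your proposal follows essentially the same route as the paper: the localization sequence $\KK_Z(R)\to\KG(R)\to\KG(U)\to 0$ is precisely the exact sequence $\langle\CC\rangle\to\KG(R)\to\KG(R)/\langle\CC\rangle\to 0$ that the paper obtains from Heller's theorem (Proposition \ref{quotient Grothendieck}), and the finite generation of the right-hand term is exactly Theorem \ref{main} (your Morita-type identification of $\KG(U)$ with a Grothendieck group of projectives over $\End_R(M)$ is just a rederivation of that theorem's proof). The one step that needs repair is your justification of the finite generation of $\KK_Z(R)$: appealing to ``the class groups of the one-dimensional components furnished by Corollary \ref{application1}'' does not work, since Corollary \ref{application1} requires a normal domain admitting an NCR, and neither hypothesis is available for the components of $Z$. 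Fortunately nothing of the sort is needed. Because $R$ is noetherian and semilocal and $\dim Z\le 1$, every prime of $Z$ is either minimal in $Z$ (finitely many, by noetherianity) or maximal in $R$ (finitely many, by semilocality), so $Z$ is a \emph{finite set of primes}; a prime filtration of any module supported on $Z$ then shows that $\KK_Z(R)$ --- equivalently the subgroup $\langle\CC_M\rangle$ of $\KG(R)$ --- is generated by the finitely many classes $[R/\pp]$ with $\pp\in Z$. This is the paper's (one-line) argument, and with that substitution your proof is complete; note also that the d\'evissage you mention is exactly this prime-filtration step, so no input about class groups enters at all.
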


Let us now state and prove our key technical result.
Let $(-)^*:=\Hom_R(-,R)$.
For an $R$-module $M$, let $E:=\End_R(M)$ and we denote by
\[a_M:M\otimes_EM^*\to R\]
the natural map sending $m\otimes f\in M\otimes_EM^*$ to $f(m)$.

\begin{definition}
We define the \emph{non-generating locus} $\NG(M)$ of $M$ as the support of $\Cok(a_M)$. 
\end{definition}

{For $M\in\mod R$, we denote by $\add_RM=\add M$ the full subcategory of $\mod R$ consisting of direct summands of direct sums of copies of $M$.}
Recall that $M\in\mod R$ is called a \emph{generator} of $\mod R$ if there exists
a surjection from a direct sum of copies of $M$ to $R$, or
equivalently, $R\in\add M$.
The following observation explains the name of $\NG(M)$.

\begin{proposition}
$\NG(M)$ is the set of prime ideals $\pp$ such that
$M_{\pp}$ is not a generator of {$\mod R_\pp$}.
\end{proposition}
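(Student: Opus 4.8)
The plan is to reduce the statement to the standard characterization of generators via trace ideals, and then to transfer between the global and local pictures by localization.

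First I would identify the image of $a_M$. A general element of $M\otimes_E M^*$ is a finite sum $\sum_i m_i\otimes f_i$, which $a_M$ sends to $\sum_i f_i(m_i)$; hence $\tau:=\Image(a_M)$ is exactly the trace ideal $\sum_{f\in M^*}f(M)$ of $M$, which is an ideal of $R$ since $a_M$ is $R$-linear. Consequently $\Cok(a_M)=R/\tau$, so that
\[
\NG(M)=\supp(R/\tau)=\{\pp\in\Spec(R):\tau\subseteq\pp\}.
\]
The proposition thus amounts to the equivalence: $\tau\subseteq\pp$ if and only if $M_\pp$ fails to be a generator of $\mod R_\pp$.

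Next I would record the elementary fact that, over any commutative ring $S$, a module $N$ is a generator of $\mod S$ if and only if its trace ideal equals $S$. Indeed, if $\sum_i g_i(n_i)=1$ with $g_i\in N^*$ and $n_i\in N$, then $(x_i)_i\mapsto\sum_i g_i(x_i)$ defines a surjection $N^{k}\twoheadrightarrow S$ which splits because $S$ is free, so $S\in\add N$; conversely a splitting $S\to N^{k}\to S$ of the identity exhibits $1$ as a sum $\sum_i g_i(n_i)$ lying in the trace ideal. To connect this with localization, I would use that $R$ is noetherian and $M$ finitely generated, hence finitely presented. This makes localization commute with $\End_R$ and with $(-)^*$, giving $E_\pp\cong\End_{R_\pp}(M_\pp)$ and $(M^*)_\pp\cong(M_\pp)^*$; since $R$ is central in $E$ and $R_\pp$ is flat over $R$, it also commutes with the balanced tensor, so that $(a_M)_\pp$ is identified with the analogous map $a_{M_\pp}$ for the $R_\pp$-module $M_\pp$. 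Exactness of localization then yields $\tau_\pp=\Image(a_{M_\pp})=\operatorname{tr}(M_\pp)$. Combining the steps, $\tau\subseteq\pp$ iff $\tau_\pp\neq R_\pp$ (a standard fact for ideals) iff $\operatorname{tr}(M_\pp)\neq R_\pp$ iff $M_\pp$ is not a generator of $\mod R_\pp$, which is the claim.

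The step I expect to require the most care is the identification $(a_M)_\pp\cong a_{M_\pp}$: one must verify that forming $M^*$, the endomorphism ring $E$, and the balanced tensor product $M\otimes_E M^*$ all commute with localization. The commutation of $\Hom$ with localization is where finite presentation of $M$ is genuinely used, and the noncommutativity of $E$ means the tensor–localization compatibility should be justified through the central flat base change $R\to R_\pp$ rather than simply quoted for commutative rings.
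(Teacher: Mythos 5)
Your proof is correct and follows essentially the same route as the paper: both identify the image of $a_M$ with the trace ideal, use the standard fact that a module is a generator exactly when its trace ideal is the whole ring, and pass between the global and local pictures via localization. The paper compresses this into one line, while you make explicit the compatibility $(a_M)_\pp\cong a_{M_\pp}$ (resting on $(M^*)_\pp\cong(M_\pp)^*$ for finitely generated modules over a noetherian ring), which the paper uses implicitly.
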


\begin{proof}
Let $\pp$ be a prime ideal of $R$.
Then $M_{\pp}$ is a generator of {$\mod R_\pp$}, if and only if $\sum_{f\in M_{\pp}^{*}}f(M_{\pp})=R_{\pp}$, if and only if $(a_M)_{\pp}$ is surjective, if and only if $(\Cok a_M)_{\pp}=0$.
\end{proof}

For a {full} subcategory $\CC$ of $\mod R$, we denote by $\langle\CC\rangle$
the subgroup of $\KG(R)$ generated by elements $[X]$ with $X\in\CC$.
Our main result in this section is the following:

\begin{theorem}\label{main}
Let $R$ be a semilocal ring and $M$ give a NCR of $R$.
Let $\CC_{M}$ be the {full} subcategory of $\mod R$ consisting of $X$ satisfying
$\supp X\subset\NG(M)$.
Then $\KG(R)/\langle\CC_{M}\rangle$ is a finitely generated abelian group.
\end{theorem}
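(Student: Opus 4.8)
The plan is to exploit the hypothesis $\gl\End_R(M)<\infty$ together with the description of $\NG(M)$ as the locus where $M$ fails to be locally a generator (the preceding Proposition). Write $E=\End_R(M)$. The functor $\Hom_R(M,-)$ restricts to an equivalence $\add M\xrightarrow{\sim}\operatorname{proj}E$, so that every finitely generated projective $E$-module is of the form $\Hom_R(M,M')$ with $M'\in\add M$, and the counit $M\otimes_E\Hom_R(M,M')\to M'$ is an isomorphism. Let $\langle\add M\rangle\subseteq\KG(R)$ be the subgroup generated by the classes $[M']$, $M'\in\add M$; since $M$ has only finitely many indecomposable direct summands, $\langle\add M\rangle$ is a finitely generated abelian group. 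It therefore suffices to prove that $\KG(R)/\langle\CC_M\rangle$ is generated by the image of $\langle\add M\rangle$, that is, $[X]\in\langle\add M\rangle+\langle\CC_M\rangle$ for every $X\in\mod R$.

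Given $X\in\mod R$, set $N=\Hom_R(M,X)\in\mod E$. Because $\gl E<\infty$, $N$ admits a finite projective resolution $0\to P_n\to\cdots\to P_0\to N\to 0$ over $E$, and by the equivalence above we may write $P_i=\Hom_R(M,M_i)$ with $M_i\in\add M$. Applying the right-exact functor $M\otimes_E-$ and using $M\otimes_E P_i\cong M_i$, we obtain a complex $0\to M_n\to\cdots\to M_0\to 0$ of finitely generated $R$-modules whose $i$-th homology is $\operatorname{Tor}^E_i(M,N)$; in particular its $0$-th homology is $M\otimes_E N$. Comparing the Euler characteristic of this complex with that of its homology yields, in $\KG(R)$,
\[
\sum_{i}(-1)^i[M_i]=\sum_{i}(-1)^i[\operatorname{Tor}^E_i(M,N)].
\]
The left-hand side lies in $\langle\add M\rangle$, so it remains to identify the right-hand side with $[X]$ modulo $\langle\CC_M\rangle$.

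This is where $\NG(M)$ enters, via a local computation. Let $\pp\notin\NG(M)$; by the Proposition, $M_\pp$ is a generator of $\mod R_\pp$, and since $M$ is finitely generated, the formation of $E$, of $\Hom$, and of $\operatorname{Tor}$ all commute with localization at $\pp$. For a generator $M_\pp$ over $R_\pp$ the Morita theory of generators gives that $M_\pp$ is finitely generated projective as a right $E_\pp$-module and that the counit $M_\pp\otimes_{E_\pp}\Hom_{R_\pp}(M_\pp,X_\pp)\to X_\pp$ is an isomorphism. Consequently $\operatorname{Tor}^E_i(M,N)_\pp=0$ for all $i\geq 1$, and the kernel and cokernel of the global evaluation map $\epsilon_X\colon M\otimes_E N\to X$ vanish at $\pp$. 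Hence for $i\geq 1$ the modules $\operatorname{Tor}^E_i(M,N)$, together with $\Ker\epsilon_X$ and $\Cok\epsilon_X$, are supported inside $\NG(M)$ and so belong to $\CC_M$. The exact sequence $0\to\Ker\epsilon_X\to M\otimes_E N\to X\to\Cok\epsilon_X\to 0$ then gives $[M\otimes_E N]\equiv[X]\pmod{\langle\CC_M\rangle}$, and combining this with the displayed identity we obtain $[X]\equiv\sum_i(-1)^i[M_i]\in\langle\add M\rangle\pmod{\langle\CC_M\rangle}$, as required.

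The main obstacle is the local Morita step: one must verify cleanly that the non-generating locus is precisely where the counit fails to be an isomorphism and where the higher $\operatorname{Tor}$'s of $M$ over $E$ survive, using that a local generator is automatically projective over its endomorphism ring so that these Tor's vanish. Everything else—the equivalence $\add M\simeq\operatorname{proj}E$, the finite generation of $\langle\add M\rangle$, and the Euler-characteristic bookkeeping—is formal, and the finiteness of the global dimension of $E$ is used only to guarantee the finite projective resolution of $\Hom_R(M,X)$.
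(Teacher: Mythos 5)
Your overall strategy is sound and in fact runs parallel to the paper's: both arguments amount to producing a surjection from $\KG(\End_R(M))$ onto $\KG(R)/\langle\CC_{M}\rangle$ --- the paper via the exact, dense functor $\Hom_E(M^*,-)$ into the quotient category $(\mod R)/\CC_{M}$ together with Heller's theorem, you via $M\otimes_E-$ applied to finite projective resolutions and an Euler-characteristic computation --- and your local Morita step at primes outside $\NG(M)$ is correct. The genuine gap is the final finiteness claim: ``since $M$ has only finitely many indecomposable direct summands, $\langle\add M\rangle$ is a finitely generated abelian group.'' This is a non sequitur. An object of $\add M$ is a direct summand of some $M^{\oplus n}$, and without Krull--Schmidt such a summand need not be built from summands of $M$ itself; $\add M$ can contain infinitely many pairwise non-isomorphic indecomposables even when $M$ is indecomposable. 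Concretely, let $R$ be the coordinate ring of a smooth affine curve of genus one over $\C$ and $M=R$: then $M$ is indecomposable and gives a NCR (as $\gl R=1$), $\add M$ is the category of finitely generated projectives, and $\langle\add M\rangle=\KG(R)\cong\Z\oplus\Cl(R)$ is not finitely generated because $\Cl(R)$ is a quotient of the divisible uncountable group $\Pic^0$ of the compactification. Since here $\NG(M)=\emptyset$ and $\langle\CC_M\rangle=0$, this example also shows that the theorem itself is false without the semilocal hypothesis --- a hypothesis your proof never invokes anywhere.

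To close the gap you need exactly the ingredient the paper uses at this point: since $R$ is semilocal and $E=\End_R(M)$ is a module-finite $R$-algebra, the theorem of Fuller and Shutters guarantees that $E$ has only finitely many isomorphism classes of indecomposable finitely generated projective modules. Transporting this through your equivalence $\add M\simeq\operatorname{proj}E$, and using that every finitely generated module over a noetherian ring is a finite direct sum of indecomposables, one gets that $\add M$ has only finitely many indecomposable objects up to isomorphism, whence $\langle\add M\rangle$ (equivalently $\KG(E)$, which is generated by classes of projectives because $\gl E<\infty$) is finitely generated. With that citation inserted where you assert the finite generation of $\langle\add M\rangle$, your argument becomes a complete and slightly more hands-on version of the paper's proof.
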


First let us recall a well-known fact on Grothendieck groups.
Let $\CC$ be a Serre subcategory of $\mod R$ (i.e. $\CC$ is
closed under submodules, factor modules and extensions), and
let $(\mod R)/\CC$ be the quotient abelian category of $\mod R$
\cite{Pop}:
The objects of $(\mod R)/\CC$ is the same as $\mod R$, and the morphism set is given by
\[\Hom_{(\mod R)/\CC}(X,Y):=\varinjlim_{X',Y'}\Hom_R(X',Y/Y')\]
where $X'$ and $Y'$ run over all submodules of $X$ and $Y$ respectively such that $X/X',Y'\in\CC$.
In this case we have the following observation.

\begin{proposition}\label{quotient Grothendieck}\cite{He}
$\KG((\mod R)/\CC)$ is isomorphic to $\KG(R)/\langle\CC\rangle$.
\end{proposition}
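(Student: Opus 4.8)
The plan is to show that the exact quotient functor $Q\colon\mod R\to(\mod R)/\CC$ induces the desired isomorphism after factoring out $\langle\CC\rangle$. Exactness of $Q$ gives a homomorphism $Q_*\colon\KG(R)\to\KG((\mod R)/\CC)$, $[X]\mapsto[Q(X)]$, and since $Q(C)=0$ for every $C\in\CC$ we have $\langle\CC\rangle\subseteq\Ker Q_*$; thus $Q_*$ factors as
\[
\overline{Q}\colon\KG(R)/\langle\CC\rangle\longrightarrow\KG((\mod R)/\CC).
\]
As the objects of $(\mod R)/\CC$ are by definition the objects of $\mod R$, every generator $[\overline{X}]$ of the target equals $[Q(X)]$, so $\overline{Q}$ is surjective. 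The remaining task is to construct a two-sided inverse.

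I would define the candidate inverse $\Phi$ by $[\overline{X}]\mapsto[X]+\langle\CC\rangle$, where $X$ is the $R$-module underlying the object $\overline{X}$, and then verify that this descends to $\KG((\mod R)/\CC)$. Recalling that the latter is generated by the isomorphism classes $[\overline{X}]$ subject to the relations coming from short exact sequences in $(\mod R)/\CC$, two verifications are required: (I) the assignment is constant on isomorphism classes, i.e.\ $Q(X)\cong Q(Y)$ implies $[X]\equiv[Y]\pmod{\langle\CC\rangle}$; and (II) it annihilates each relation $[\overline{B}]-[\overline{A}]-[\overline{C}]$ attached to a short exact sequence $0\to\overline{A}\to\overline{B}\to\overline{C}\to0$.

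For (I) I would use the colimit description of morphisms in the quotient: an isomorphism $Q(X)\xrightarrow{\sim}Q(Y)$ is represented by a morphism $f\colon X'\to Y/Y'$ with $X/X',Y'\in\CC$, and $Q(f)$ being invertible forces $\Ker f,\Cok f\in\CC$. The exact sequences $0\to X'\to X\to X/X'\to0$ and $0\to Y'\to Y\to Y/Y'\to0$ give $[X]\equiv[X']$ and $[Y]\equiv[Y/Y']$ modulo $\langle\CC\rangle$, while $0\to\Ker f\to X'\to\Image f\to0$ and $0\to\Image f\to Y/Y'\to\Cok f\to0$ give $[X']-[Y/Y']=[\Ker f]-[\Cok f]\in\langle\CC\rangle$. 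Chaining these congruences yields $[X]\equiv[Y]$.

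For (II) I would invoke the lifting of short exact sequences through $Q$, which I expect to be the main obstacle: given a short exact sequence $0\to\overline{A}\to\overline{B}\to\overline{C}\to0$ in $(\mod R)/\CC$, one must produce an honest short exact sequence $0\to A'\to B'\to C'\to0$ in $\mod R$ with $Q(A')\cong\overline{A}$, $Q(B')\cong\overline{B}$, $Q(C')\cong\overline{C}$. This is the technical heart of the calculus of fractions for a Serre quotient (cf.\ \cite{Pop}). Granting it, $[A']-[B']+[C']=0$ in $\KG(R)$, and applying (I) to the isomorphisms $Q(A')\cong\overline{A}$ etc.\ we obtain $[B]-[A]-[C]\equiv[B']-[A']-[C']=0\pmod{\langle\CC\rangle}$. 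Hence $\Phi$ descends to a homomorphism $\KG((\mod R)/\CC)\to\KG(R)/\langle\CC\rangle$ with $[Q(X)]\mapsto[X]+\langle\CC\rangle$, manifestly inverse to $\overline{Q}$ on generators, completing the proof.
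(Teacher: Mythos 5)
Your proof is correct, but note that the paper offers no argument of its own for this proposition: it simply cites Heller \cite{He}, whose localization theorem yields the exact sequence $\KG(\CC)\to\KG(\mod R)\to\KG((\mod R)/\CC)\to 0$, of which the statement is an immediate rephrasing since $\langle\CC\rangle$ is precisely the image of the first map. What you have written is, in effect, the standard direct proof of Heller's theorem in this special case: surjectivity of $\overline{Q}$ from exactness of the quotient functor, and the inverse $\Phi$ checked on isomorphism classes (your step (I)) and on relations from short exact sequences (your step (II)). Step (I) is complete once one grants the standard characterization that $Q(f)$ is invertible if and only if $\Ker f,\Cok f\in\CC$, which you use implicitly and which is in \cite{Pop}. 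Step (II) correctly isolates the one genuinely technical point, the lifting of short exact sequences through $Q$, and citing \cite{Pop} for it is legitimate; it is worth recording, though, that it follows quickly from the same ingredients: represent the monomorphism $\overline{A}\to\overline{B}$ by $f\colon A'\to B/B'$ with $A/A',\,B'\in\CC$; since the quotient morphism is monic one has $\Ker f\in\CC$, so after replacing $A'$ by $A'/\Ker f$ one may take the honest exact sequence $0\to\Image f\to B/B'\to(B/B')/\Image f\to 0$ in $\mod R$, whose image under the exact functor $Q$ is isomorphic to the given sequence because $Q$ preserves cokernels. With that supplement your argument is a complete, self-contained, and more elementary proof than the paper's appeal to the K-theoretic localization sequence, at the cost of the diagram-level bookkeeping that Heller's theorem packages away.
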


We need the following general observations on generators.

\begin{lemma}\label{property of generator}
Let $M\in\mod R$ be a generator and $E:=\End_R(M)$.
Then we have the following properties.
\begin{itemize}
\item[(a)] $M^*$ is a projective $E$-module.
\item[(b)] The natural map $a_M:M\otimes_EM^*\to R$ is an isomorphism.
\end{itemize}
\end{lemma}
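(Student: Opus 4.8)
The plan is to reduce both statements to the classical Morita-theoretic description of $\add M$. Since $M$ is a generator, there is a surjection $M^{\oplus n}\to R$ for some $n$; as $R$ is free, this surjection splits, so $R$ is a direct summand of $M^{\oplus n}$, i.e. $R\in\add M$. This single observation is what lets me transport information about $M$ to information about $R$.

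First I would set up the functor $\Hom_R(M,-)\colon\mod R\to\mod E$, which sends an $R$-module $N$ to the $E$-module $\Hom_R(M,N)$ and satisfies $\Hom_R(M,M)=E$. The key structural fact is that its restriction to $\add M$ is fully faithful with essential image the finitely generated projective $E$-modules; that is, $\Hom_R(M,-)$ induces an equivalence $\add M\simeq\add E$. Both assertions follow formally from the case $X=M$ of the canonical map $\Hom_R(X,Y)\to\Hom_E(\Hom_R(M,X),\Hom_R(M,Y))$, which is an isomorphism for $X=M$, together with additivity and the fact that every object of $\add M$ is a summand of some $M^{\oplus n}$. Granting this equivalence, statement (a) is immediate: since $R\in\add M$, its image $M^*=\Hom_R(M,R)$ lies in $\add E$, hence is a projective $E$-module.

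For (b) I would consider the natural evaluation map $\epsilon_N\colon M\otimes_E\Hom_R(M,N)\to N$, $m\otimes g\mapsto g(m)$, and note that $\epsilon_R$ is exactly $a_M$. For $N=M$ this map is the canonical isomorphism $M\otimes_E E\cong M$. Since both $M\otimes_E\Hom_R(M,-)$ and the identity functor are additive and $\epsilon$ is a natural transformation compatible with finite direct sums and with passage to direct summands, $\epsilon_N$ is an isomorphism for every $N\in\add M$. Applying this to $N=R\in\add M$ shows that $a_M=\epsilon_R$ is an isomorphism.

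The arguments are essentially formal once the equivalence $\add M\simeq\add E$ is in place, so I do not expect a single conceptually hard step; the only point requiring care is the bookkeeping of bimodule structures — $M$ as an $R$-$E$-bimodule and $M^*$ as an $E$-$R$-bimodule — needed to ensure that $a_M$ and the $\epsilon_N$ are genuine maps of $R$-modules and that the reduction of $\epsilon$ to direct summands is legitimate. I expect this routine verification, rather than any real difficulty, to be the main thing to get right.
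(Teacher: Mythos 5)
Your proposal is correct and follows essentially the same route as the paper: part (a) via $R\in\add M$ forcing $M^*=\Hom_R(M,R)\in\add_E E$, and part (b) via the natural transformation $M\otimes_E\Hom_R(M,-)\to 1_{\mod R}$ (your $\epsilon$, the paper's $b$), which is an isomorphism on $M$ and hence on all of $\add M$, in particular on $R$. The extra remarks about the full equivalence $\add M\simeq\add E$ are more than is needed but do no harm.
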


\begin{proof}
(a) Since $R\in\add_RM$, we have $M^*\in\add_E\Hom_R(M,M)=\add_EE$.

(b) {For any $X\in\mod R$, we denote by $b_X:M\otimes_E\Hom_R(M,X)\to X$ the natural map sending $m\otimes f$ to $f(m)$.
This gives a natural transformation $b:M\otimes_E\Hom_R(M,-)\to{\rm 1_{\mod R}}$ of additive functors $\mod R\to\mod R$.
Since $b_M$ is clearly an isomorphism, so is $b_X$ for any $X\in\add_RM$.
In particular, $b_R$ is an isomorphism. Since $a_M=b_R$, we have the assertion.}
\end{proof}

Clearly $\CC_{M}$ is a Serre subcategory of $\mod R$. 
Let $(\mod R)/\CC_{M}$ be the quotient abelian category of $\mod R$.
We define a functor
\[F:\mod E\xrightarrow{\Hom_E(M^*,-)}\mod R\to(\mod R)/\CC_{M}\]
where $\mod R\to(\mod R)/\CC_{M}$ is a natural functor.

\begin{lemma}\label{exactness}
$F$ is an exact functor.
\end{lemma}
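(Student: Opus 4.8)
The plan is to show that the functor
\[F:\mod E\xrightarrow{\Hom_E(M^*,-)}\mod R\to(\mod R)/\CC_{M}\]
is exact by analyzing each of its two constituents separately. The second arrow, the canonical quotient functor $\mod R\to(\mod R)/\CC_{M}$, is always exact since $\CC_M$ is a Serre subcategory (this is the standard construction recalled just before Proposition \ref{quotient Grothendieck}). So everything reduces to controlling the failure of exactness of the first functor $\Hom_E(M^*,-)$, and then showing that whatever failure occurs is annihilated once we pass to the quotient by $\CC_M$.

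My first step is to recall that, by Lemma \ref{property of generator}(a), $M^*$ is a projective $E$-module \emph{after localizing at any prime $\pp\notin\NG(M)$}. Indeed, at such a prime $M_\pp$ is a generator of $\mod R_\pp$ (by the Proposition identifying $\NG(M)$ as the non-generating locus), so Lemma \ref{property of generator}(a) applies locally to give that $(M^*)_\pp$ is a projective $E_\pp$-module. Consequently $\Hom_E(M^*,-)$ is exact after localizing at any such $\pp$: a short exact sequence $\ses XYZ$ in $\mod E$ yields, after applying $\Hom_E(M^*,-)$, a complex of $R$-modules whose homology is supported only on $\NG(M)$, since at every prime outside $\NG(M)$ the localized functor $\Hom_{E_\pp}((M^*)_\pp,-)$ is exact. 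The second step is then to observe that the homology modules live in $\CC_M$ precisely because their support is contained in $\NG(M)$, so they become zero in the quotient category $(\mod R)/\CC_{M}$. Hence $F$ takes short exact sequences to short exact sequences, which is exactly the assertion that $F$ is exact.

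The main obstacle I anticipate is making the support-estimate on the homology fully rigorous. One must check that the higher derived functors (or equivalently the cokernel/kernel discrepancies) of $\Hom_E(M^*,-)$ are \emph{finitely generated} $R$-modules whose formation commutes with localization, so that vanishing at each prime outside $\NG(M)$ genuinely forces the support to sit inside $\NG(M)$. This uses that $M$, and hence $E=\End_R(M)$ and $M^*=\Hom_R(M,R)$, are finitely generated, together with the fact that $E$ is a noetherian $R$-algebra (so the relevant Ext groups are coherent and localization-friendly). Once this localization-commutes-with-homology point is nailed down, the argument is formal.

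A cleaner packaging, which I would likely prefer in the write-up, is to avoid derived functors entirely: given $\ses XYZ$ in $\mod E$, the sequence
\[\Hom_E(M^*,X)\to\Hom_E(M^*,Y)\to\Hom_E(M^*,Z)\]
is already exact since $\Hom$ is left exact, and only right-exactness can fail. The obstruction to right-exactness is an $R$-module $C:=\Cok\big(\Hom_E(M^*,Y)\to\Hom_E(M^*,Z)\big)$, and $C_\pp=0$ for all $\pp\notin\NG(M)$ by the local projectivity of $M^*$ established above. Therefore $\supp C\subset\NG(M)$, so $C\in\CC_M$ and $C$ vanishes in $(\mod R)/\CC_{M}$; together with left-exactness this gives exactness of $F$. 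Framing the proof through the single cokernel $C$ sidesteps the need to invoke any derived machinery and isolates the one genuinely geometric input, namely that $M^*$ is locally projective over $E$ away from $\NG(M)$.
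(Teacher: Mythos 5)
Your proof is correct and follows essentially the same route as the paper: both arguments reduce everything to the fact that $M^*_\pp$ is a projective $E_\pp$-module for $\pp\notin\NG(M)$ (Lemma \ref{property of generator}(a) applied locally), and conclude that the obstruction to right-exactness of $\Hom_E(M^*,-)$ is supported on $\NG(M)$, hence lies in $\CC_M$ and dies in the quotient. The only cosmetic difference is that the paper names this obstruction via $\Ext^1_E(M^*,X)$ in the four-term exact sequence, whereas you work directly with the cokernel $C$, which is a submodule of that $\Ext^1$; the content is identical.
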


\begin{proof}
Let $0\to X\to Y\to Z\to 0$ be an exact sequence in $\mod E$.
Applying $\Hom_E(M^*,-)$, we have an exact sequence
\[0\to\Hom_E(M^*,X)\to\Hom_E(M^*,Y)\to\Hom_E(M^*,Z)\to\Ext^1_E(M^*,X)\]
We only have to show $\Ext^1_E(M^*,X)\in\CC_{M}$.
For any prime ideal $\pp\notin\NG(M)$, we have that $M_{\pp}^*$ is a projective
$E_{\pp}$-module by Lemma \ref{property of generator}. Thus we have
\[\Ext^1_E(M^*,X)_{\pp}=\Ext^1_{E_{\pp}}(M_{\pp}^*,X_{\pp})=0,\]
and so $\supp\Ext^1_E(M^*,X)\subset\NG(M)$.
\end{proof}

Next we show the following property of $F$.

\begin{lemma}\label{density}
$F$ is a dense functor.
\end{lemma}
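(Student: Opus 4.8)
The plan is to produce, for every object $X$ of $(\mod R)/\CC_M$ (which has the same objects as $\mod R$), an $E$-module $N$ together with an isomorphism $F(N)\cong X$ in the quotient. The natural candidate comes from tensor--hom adjunction: viewing $M^*$ as an $(E,R)$-bimodule, the functor $M^*\otimes_R-:\mod R\to\mod E$ is left adjoint to $\Hom_E(M^*,-):\mod E\to\mod R$. I would set $N:=M^*\otimes_RX$ and consider the unit of this adjunction
\[\eta_X\colon X\to\Hom_E(M^*,M^*\otimes_RX),\qquad x\mapsto(f\mapsto f\otimes x).\]
Note $N\in\mod E$ since $M^*\otimes_RX$ is finitely generated already as an $R$-module. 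I claim $\eta_X$ becomes an isomorphism in $(\mod R)/\CC_M$; granting this, $F(N)=\Hom_E(M^*,M^*\otimes_RX)\cong X$ in the quotient, proving density.

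To prove the claim, recall that $\CC_M$ is precisely the subcategory of modules supported on $\NG(M)$ and that localization is exact. Hence it suffices to show that $(\eta_X)_\pp$ is an isomorphism for every prime $\pp\notin\NG(M)$: then $\Ker\eta_X$ and $\Cok\eta_X$ are supported on $\NG(M)$, so they lie in $\CC_M$, and a morphism with kernel and cokernel in a Serre subcategory is invertible in the corresponding Serre quotient. So fix $\pp\notin\NG(M)$. By the Proposition identifying $\NG(M)$, the module $M_\pp$ is a generator of $\mod R_\pp$, so Lemma \ref{property of generator} applied over $R_\pp$ gives that $M_\pp^*$ is a finitely generated projective $E_\pp$-module (here $E_\pp\cong\End_{R_\pp}(M_\pp)$ since $M$ is finitely presented).

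The key input is the local double-centralizer identity $\End_{E_\pp}(M_\pp^*)\cong R_\pp$. I would deduce it from the equivalence of additive categories $\Hom_{R_\pp}(M_\pp,-)\colon\add_{R_\pp}M_\pp\xrightarrow{\ \sim\ }\add_{E_\pp}E_\pp$ induced by a generator: this functor is fully faithful (it is so on $M_\pp$, and both sides are additive) and essentially surjective (as $\Hom_{R_\pp}(M_\pp,M_\pp)=E_\pp$); since $R_\pp\in\add_{R_\pp}M_\pp$ it sends $R_\pp$ to $M_\pp^*$, whence $\End_{E_\pp}(M_\pp^*)\cong\End_{R_\pp}(R_\pp)=R_\pp$. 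Because $M_\pp^*$ is finitely generated projective over $E_\pp$, the canonical map $\End_{E_\pp}(M_\pp^*)\otimes_{R_\pp}X_\pp\to\Hom_{E_\pp}(M_\pp^*,M_\pp^*\otimes_{R_\pp}X_\pp)$ is an isomorphism, so the target is identified with $R_\pp\otimes_{R_\pp}X_\pp=X_\pp$. Tracing $\mathrm{id}_{M_\pp^*}\leftrightarrow 1$ through these identifications shows that $(\eta_X)_\pp$ becomes the identity, hence is an isomorphism.

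The main obstacle is precisely this local identification $\End_{E_\pp}(M_\pp^*)\cong R_\pp$ together with the verification that the resulting local isomorphism is genuinely $(\eta_X)_\pp$ rather than merely an abstract one; once the generator machinery of Lemma \ref{property of generator} has been localized, the remaining step is the standard commutation of $\Hom$ out of a finitely generated projective with a tensor product. It is worth observing that, while $M_\pp^*$ is projective over $E_\pp$, it is typically not a generator of $\mod E_\pp$, so $F$ is only dense and not an equivalence; this is exactly the level of information that Theorem \ref{main} requires.
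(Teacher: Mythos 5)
Your proof is correct, but it takes a genuinely different route from the paper's. The paper takes as preimage of $X$ the $E$-module $Y=\Hom_R(M,X)$, rewrites $F(Y)=\Hom_E(M^*,\Hom_R(M,X))\cong\Hom_R(M\otimes_EM^*,X)$ by tensor--hom adjunction, and then observes that the map $X\to\Hom_R(M\otimes_EM^*,X)$ induced by $a_M$ has kernel and cokernel in $\CC_{M}$, because $(a_M)_\pp$ is an isomorphism for every $\pp\notin\NG(M)$ by Lemma \ref{property of generator}(b); that single lemma does all the work. You instead take $N=M^*\otimes_RX$, the value on $X$ of the left adjoint of $\Hom_E(M^*,-)$, and show that the unit $\eta_X$ localizes to an isomorphism outside $\NG(M)$. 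These really are different preimages: in general $M^*\otimes_RX$ and $\Hom_R(M,X)$ are non-isomorphic $E$-modules (e.g.\ $R=k[[t]]$, $M=R\oplus R/(t)$, $X=k$), which is consistent with $F$ being dense but not faithful. The price of your choice is that the local verification needs more than Lemma \ref{property of generator}: besides the localized version of part (a) (projectivity of $M_\pp^*$ over $E_\pp$), you must establish the double-centralizer identity $\End_{E_\pp}(M_\pp^*)\cong R_\pp$ and then commute $\Hom_{E_\pp}(M_\pp^*,-)$ past $-\otimes_{R_\pp}X_\pp$. Your derivation of the double-centralizer identity from the projectivization equivalence $\add_{R_\pp}M_\pp\simeq\add_{E_\pp}E_\pp$ is correct, as is the Hom--tensor commutation for a finitely generated projective module and the final diagram chase identifying $(\eta_X)_\pp$ with the identity; the reduction to localization via exactness and the Serre-quotient formalism is also fine. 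In short, both arguments exploit the same Morita-theoretic mechanism (away from $\NG(M)$ the module $M$ is a generator), but the paper's choice of preimage makes Lemma \ref{property of generator}(b) directly applicable and yields a shorter proof, while yours is self-contained at the cost of proving an extra standard fact.
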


\begin{proof}
For any $X\in\mod R$, let $Y:=\Hom_R(M,X)\in\mod E$.
Then we have
\[F(Y)=\Hom_E(M^*,\Hom_R(M,X))\cong\Hom_R(M\otimes_EM^*,X).\]
For any prime ideal $\pp\notin\NG(M)$, 
we have that $M_{\pp}$ is a generator of  {$\mod R_\pp$}. Thus $(a_M)_{\pp} {:(M\otimes_EM^*)_{\pp}\to R_{\pp}}$ is an isomorphism by Lemma \ref{property of generator}(b).
Hence the natural map
\[(a_M\cdot):X\to\Hom_R(M\otimes_EM^*,X)=F(Y)\]
induced by $a_M$ has the kernel and the cokernel in $\CC_{M}$.
Consequently $X$ is isomorphic to $F(Y)$ in $(\mod R)/\CC_{M}$.
\end{proof}

By Lemma \ref{exactness} and Proposition \ref{quotient Grothendieck},
we have a homomorphism
\[\KG(E)\to \KG((\mod R)/\CC_{M})\cong \KG(R)/\langle\CC_{M}\rangle\]
of abelian groups. This is surjective by Lemma \ref{density}.

\begin{lemma}
Let $R$ be a semilocal ring and $E$ a module-finite $R$-algebra.
If the global dimension of $E$ is finite, then $\KG(E)$ is finitely generated.
\end{lemma}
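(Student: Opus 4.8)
The plan is to combine two standard reductions: first to trade the Grothendieck group $\KG(E)$ of all finitely generated $E$-modules for the Grothendieck group of finitely generated \emph{projective} $E$-modules, and then to bound the latter by comparing $E$ with its semisimple quotient $E/\operatorname{rad}E$.

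First I would invoke the hypothesis $\gl E<\infty$. Since $R$ is noetherian and $E$ is module-finite over $R$, the ring $E$ is noetherian, so every finitely generated $E$-module admits a resolution of finite length by finitely generated projectives. By the resolution theorem (Quillen, or the classical version for $\KG$), the inclusion of the exact category of finitely generated projective $E$-modules into $\mod E$ induces an isomorphism on Grothendieck groups, so $\KG(E)$ is identified with the $\mathrm{K}_0$ of finitely generated projective $E$-modules. It therefore suffices to prove that this latter group is finitely generated.

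Next I would establish that $E$ is a \emph{semilocal} ring, i.e.\ that $E/\operatorname{rad}E$ is semisimple artinian. Writing $\mathfrak{r}=\operatorname{rad}R$, the hypothesis that $R$ is semilocal makes $R/\mathfrak{r}$ a finite product of fields, hence semisimple artinian, and then $E/\mathfrak{r}E$ is a module-finite, hence artinian, algebra over it. The key point is the inclusion $\mathfrak{r}E\subseteq\operatorname{rad}E$: each simple $E$-module is cyclic, hence finitely generated over the central subring $R$, so if $\mathfrak{r}$ did not annihilate it Nakayama's lemma would force it to vanish. Consequently $E/\operatorname{rad}E$ is a quotient of the artinian ring $E/\mathfrak{r}E$ and is semisimple.

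Finally I would show that the $\mathrm{K}_0$ of finitely generated projective $E$-modules injects into $K_0(E/\operatorname{rad}E)$, which is free abelian of finite rank (the number of simple $E/\operatorname{rad}E$-modules); since a subgroup of a finitely generated abelian group is finitely generated, this yields the result. The heart of this step — and the main obstacle — is that $E$ need \emph{not} be semiperfect: because $R$ is only semilocal and not, say, a product of henselian local rings, idempotents may fail to lift modulo $\operatorname{rad}E$ (already $E=R$ can be a semilocal domain with several maximal ideals), so Krull–Schmidt for projectives is unavailable. Instead I would argue directly that two finitely generated projective $E$-modules $P,Q$ are isomorphic as soon as $\overline P\cong\overline Q$, where $\overline{(-)}$ denotes reduction modulo $\operatorname{rad}E$: lift such an isomorphism along the map $\Hom_E(P,Q)\to\Hom_{E/\operatorname{rad}E}(\overline P,\overline Q)$ (surjective because $P$ is projective), observe that the lift is surjective by Nakayama, hence split since $Q$ is projective, and then kill the complementary summand using cancellation over the semisimple ring $E/\operatorname{rad}E$ together with Nakayama once more. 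This shows the monoid of finitely generated projective $E$-modules embeds in the free cancellative monoid of projectives over $E/\operatorname{rad}E$, whence the induced map on Grothendieck groups is injective, as required.
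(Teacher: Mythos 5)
Your proof is correct, and its second half takes a genuinely different route from the paper's. The first step is common to both: since $E$ is noetherian and $\gl E<\infty$, every finitely generated $E$-module has a finite resolution by finitely generated projectives, so $\KG(E)$ is generated by classes of projective modules. At that point the paper simply cites Fuller--Shutters \cite[Theorem 9]{FS} for the statement that a module-finite algebra over a commutative semilocal ring has only finitely many isomorphism classes of indecomposable projective modules, and concludes. You instead give a self-contained argument: you verify that $E$ is semilocal in the noncommutative sense (the inclusion $\operatorname{rad}(R)E\subseteq\operatorname{rad}E$ via Nakayama, so that $E/\operatorname{rad}E$ is a semisimple quotient of the artinian ring $E/\operatorname{rad}(R)E$), and then prove that reduction modulo $\operatorname{rad}E$ is injective on isomorphism classes of finitely generated projectives. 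Your lifting--Nakayama--cancellation argument for that injectivity is the standard proof of this classical fact and is carried out correctly, including the correct handling of the non-semiperfect case and the observation that an injection into a cancellative free commutative monoid stays injective after group completion. Your route buys slightly more than the paper's: it exhibits $\KG(E)$ as a subgroup of a free abelian group whose rank is the number of simple $E$-modules, hence free of finite rank, whereas the paper's argument only yields finite generation; the paper's proof is of course shorter given the external reference, and the finiteness of indecomposable projectives it invokes is a stronger-looking input than what the lemma actually requires.
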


\begin{proof}
Since the global dimension is finite, $\KG(E)$ is generated by indecomposable
projective $E$-modules.
Since $R$ is semilocal, it follows from \cite[Theorem 9]{FS} that there exist only
finitely many isomorphism classes of indecomposable projective $E$-modules.
Thus $\KG(E)$ is finitely generated.
\end{proof}

The above lemma completes the proof of Theorem \ref{main}.
\qed

\medskip
Now we prove Corollary \ref{application1}.
We need the following fact, see \cite{Ch}.

\begin{proposition}\label{Grothendieck to class}
Let $R$ be a normal domain and $\Phi$ be the set of prime ideals of $R$ with height at least two.
Let $\DD$ be the {full} subcategory of $\mod R$ consisting of $X$ satisfying
$\supp X\subset\Phi$. Then $\KG(R)/\langle\DD\rangle$ is isomorphic to $\mathbb Z \oplus \Cl(R)$.
\end{proposition}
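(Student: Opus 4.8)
The plan is to exhibit $\KG(R)/\langle\DD\rangle$ as the second step of the coniveau (codimension-of-support) filtration on $\KG(R)$ and to identify its two graded pieces with $\Z$ and $\Cl(R)$. Write $F^1$ for the subgroup of $\KG(R)$ generated by the classes $[T]$ of torsion modules (equivalently, of modules whose support has codimension at least one), and observe that $\langle\DD\rangle$ is precisely the analogous subgroup $F^2$ attached to codimension at least two, since $\supp X\subset\Phi$ means exactly that every minimal prime of $\supp X$ has height at least two. The inclusions $F^2\subseteq F^1\subseteq\KG(R)$ then yield a short exact sequence
\[
0\to F^1/F^2\to \KG(R)/\langle\DD\rangle\to \KG(R)/F^1\to 0,
\]
and I would prove $\KG(R)/F^1\cong\Z$ and $F^1/F^2\cong\Cl(R)$ separately. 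Since $\KG(R)/F^1$ will turn out to be free, the sequence splits automatically and gives the desired $\Z\oplus\Cl(R)$.

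For the outer term, the rank function $[M]\mapsto\dim_K(M\otimes_R K)$ (with $K$ the fraction field of the domain $R$) is additive on short exact sequences, hence defines a surjection $\rho\colon\KG(R)\to\Z$ sending $[R]$ to $1$. Its kernel contains $F^1$ because torsion modules have rank zero; conversely, splitting any $M$ as an extension of a torsion-free module by its torsion submodule and then embedding a rank-$r$ torsion-free module as a free submodule $R^r$ with torsion cokernel shows that $[M]-r[R]\in F^1$ with $r=\rho([M])$. Hence $\ker\rho=F^1$ and $\KG(R)/F^1\cong\Z$.

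The substantive step, which I expect to be the main obstacle, is the isomorphism $F^1/F^2\cong\Cl(R)$; here normality enters decisively through the fact that $R_\pp$ is a DVR for every height-one prime $\pp$. I would build mutually inverse maps. In one direction, send the class of a height-one prime to $\theta([\pp]):=[R/\pp]$; this kills principal divisors because for $a\in R\setminus\{0\}$ the sequence $0\to R\xrightarrow{a}R\to R/aR\to 0$ forces $[R/aR]=0$ in $\KG(R)$, while $\operatorname{length}_{R_\pp}(R_\pp/aR_\pp)=\operatorname{ord}_\pp(a)$ shows that $[R/aR]$ represents $\operatorname{div}(a)$ modulo $F^2$, so $\theta$ descends to a surjection $\Cl(R)\to F^1/F^2$. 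In the other direction, send a torsion class to $c([T]):=\sum_{\operatorname{ht}\pp=1}\operatorname{length}_{R_\pp}(T_\pp)\,[\pp]\in\Cl(R)$; additivity of length over the DVRs $R_\pp$ makes $c$ well-defined on $F^1$, and it visibly annihilates $F^2$ since modules supported in codimension at least two localize to zero at height-one primes. A short dévissage---filtering a torsion module by quotients $R/\mathfrak{q}$ and discarding those with $\operatorname{ht}\mathfrak{q}\geq 2$---shows that $c$ and $\theta$ are inverse to one another, giving $F^1/F^2\cong\Cl(R)$. Feeding this and $\KG(R)/F^1\cong\Z$ into the (split) short exact sequence above completes the proof; I would finally double-check that this identification is compatible with Proposition \ref{quotient Grothendieck}, so that $\KG(R)/\langle\DD\rangle$ may equally be read as $\KG((\mod R)/\DD)$.
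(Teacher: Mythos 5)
Your overall strategy --- the coniveau filtration $F^2=\langle\DD\rangle\subseteq F^1\subseteq\KG(R)$, with $\KG(R)/F^1\cong\Z$ via rank and $F^1/F^2\cong\Cl(R)$ via cycle classes of height-one primes --- is the standard one and is essentially what the cited reference [Ch] does; the paper itself offers no proof beyond that citation, so there is nothing in-paper to compare against. Your treatment of the rank map (hence $\ker\rho=F^1$ and the splitting), and your construction of the surjection $\theta\colon\Cl(R)\to F^1/F^2$ (prime filtrations, $[R/aR]=0$ in $\KG(R)$, and $\operatorname{length}_{R_\pp}(R_\pp/aR_\pp)=\operatorname{ord}_\pp(a)$) are all correct.

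The genuine gap is the well-definedness of your inverse map $c$ on $F^1$, which is exactly the injectivity of $\theta$ and the only hard point of the proposition. Additivity of length over the DVRs $R_\pp$ shows that $c$ is well defined on the Grothendieck group $\KG(\mathcal T)$ of the Serre subcategory $\mathcal T$ of torsion modules, where the only relations come from short exact sequences of torsion modules. But $F^1$ is the \emph{image} of the natural map $\phi\colon\KG(\mathcal T)\to\KG(R)$, and this map has a nontrivial kernel: relations in $\KG(R)$ arise from arbitrary short exact sequences, so two torsion modules can become identified in $\KG(R)$ through chains of extensions involving non-torsion modules. For instance $[R/aR]$ is nonzero in $\KG(\mathcal T)$ in general but vanishes in $\KG(R)$. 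For $c$ to descend to $F^1$ you must show $c(\ker\phi)\subseteq\operatorname{Prin}(R)$, and ``additivity of length'' does not address this. The standard ways to close the gap are: (i) the localization exact sequence in $K$-theory, $K_1(\mod K)=K^\times\to\KG(\mathcal T)\to\KG(R)\to\Z\to0$, which identifies $\ker\phi$ with the image of $a\mapsto[R/aR]$, whose divisor is principal (this is the content of Heller's theorem [He], already invoked elsewhere in the paper); or (ii) constructing a determinant/first Chern class homomorphism $c_1\colon\KG(R)\to\Cl(R)$ defined on \emph{all} finitely generated modules and checking its additivity on arbitrary short exact sequences (as in Bourbaki or Fossum), then observing that $c_1$ sends $[R/\pp]\mapsto[\pp]$ for height-one $\pp$ and kills $F^2$ and $[R]$. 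Either ingredient must be supplied; without it your argument only yields a surjection $\Cl(R)\twoheadrightarrow F^1/F^2$, not an isomorphism.
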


{Assume that $M$ gives a NCR of $R$.}
Since $R$ is a normal domain and $M$ is a faithful $R$-module,
we have that $M_{\pp}$ is a faithful $R_{\pp}$-module 
and hence $M_{\pp}$ has a non-zero free summand for any prime ideal $\pp\notin\Phi$.
Thus we have $\NG(M)\subset\Phi$ and $\CC_{M}\subset\DD$.
By Theorem \ref{main}, we have that $\KG(R)/\langle\DD\rangle$ is finitely generated.
By Proposition \ref{Grothendieck to class}, we have the assertion.
\qed

\medskip
Finally we prove Corollary \ref{application2}.
As $R$ is semilocal and $\dim \NG(M)\ge 1$,
the set $\NG(M)$ is finite. Since $\langle\CC_M\rangle$ is generated by $R/\pp$ with
$\pp\in \NG(M)$, it is finitely generated. Since $\KG(R)/\langle\CC_M\rangle$ is finitely
generated by Theorem \ref{main}, so is $\KG(R)$.

\qed

%
%

\section{NCRs and rational singularities}

In this section let $R$ be a normal domain containing a field $k$. We wish to discuss the following:

\begin{question}\label{ratques}
Suppose $R$ has a NCR. When can we deduce that $\Spec(R)$ has rational singularities? 
\end{question}

Recall that a variety $Y$  is said to have {\it rational singularities} if for any (equivalently, some) resolution of singularity $f: X \to Y$, we have  $R^if_*\mathcal O_X = 0 $ for $i>0$. When $Y = \Spec(R)$ this reduces to $H^i(X, \mathcal O_X)=0$ for $i>0$ (see \cite[Section 1]{Sm}).

The above question is motivated by a beautiful result by  Stafford and Van den Bergh (\cite[Theorem 4.2]{SV}):

\begin{theorem} (Stafford-Van den Bergh)
Let  $k$ be an algebraically closed field of characteristic $0$ and $\Delta$ be a prime affine $k$-algebra that is finitely generated as a module over its center $Z(\Delta)$.
If $\Delta$ is a non-singular $Z(\Delta)$-order  then $Z(\Delta)$ has only rational singularities. 

In particular, suppose $R$ is a Gorenstein normal affine $k$-algebra. If $R$ has an NCCR, then $\Spec(R)$ has only rational singularities.  
\end{theorem}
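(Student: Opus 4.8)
The first assertion is the theorem of Stafford and Van den Bergh quoted from \cite[Theorem 4.2]{SV}, so the only thing left to establish is the ``In particular'' clause, and the plan is to deduce it by applying that theorem to the order $\Delta=\End_R(M)$. Assume $R$ is a Gorenstein normal affine $k$-algebra carrying an NCCR given by a reflexive module $M$, so that by the definition recalled in the introduction $\Lambda:=\End_R(M)$ is a non-singular $R$-order. I would set $\Delta:=\Lambda$ and verify, one by one, the hypotheses of the quoted theorem: that $\Delta$ is a prime affine $k$-algebra, module-finite over its center, and that this center is exactly $R$. The non-singular order condition is then supplied verbatim by the NCCR hypothesis.

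Two of these points are formal. Since $R$ is affine over $k$ and $M$ is finitely generated, $\Delta=\End_R(M)$ is again a finitely generated $k$-algebra; and since $R$ is noetherian with $M$ finitely generated, $\Delta$ is finitely generated as an $R$-module. For primeness, a nonzero reflexive module over the domain $R$ is torsion-free, say of rank $n$, so writing $K$ for the fraction field of $R$ one obtains an embedding $\Delta\hookrightarrow\End_K(M\otimes_R K)\cong M_n(K)$ realizing $\Delta$ as an $R$-order in the simple algebra $M_n(K)$. Such an order is prime: given nonzero two-sided ideals $I,J$, clearing denominators shows that for $0\neq a\in I$ and $0\neq b\in J$ there is $y\in\Delta$ with $ayb\neq 0$, and $ayb\in IJ$.

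The one step using the hypotheses on $R$ is the identification $Z(\Delta)=R$. Because $M$ is faithful, scalar multiplication embeds $R$ into $Z(\Delta)$. Conversely $Z(\Delta)$ is a torsion-free, module-finite $R$-algebra, hence integral over $R$, and tensoring with $K$ gives $Z(\Delta)\otimes_R K\cong Z(M_n(K))=K$; thus $R\subseteq Z(\Delta)\subseteq K$ with $Z(\Delta)$ integral over $R$. Since $R$ is normal, it is integrally closed in $K$, which forces $Z(\Delta)=R$. The NCCR hypothesis now says precisely that $\Delta$ is a non-singular $Z(\Delta)$-order, so the quoted theorem applies and yields that $Z(\Delta)=R$, i.e.\ $\Spec(R)$, has only rational singularities.

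The substantive mathematics is contained entirely in the quoted theorem, whose proof rests on resolution of singularities in characteristic zero together with an Auslander-type comparison relating the finite global dimension of the order to the vanishing of higher direct images of the structure sheaf. In the deduction itself the only genuinely delicate point is the center computation, whose essential input is the normality of $R$; primeness and module-finiteness are formal, affineness over $k$ is what licenses the application of \cite[Theorem 4.2]{SV}, and the Gorenstein hypothesis, while the natural setting in which NCCRs are studied, is not needed for this implication.
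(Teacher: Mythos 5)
The paper does not prove this statement: it is quoted verbatim as \cite[Theorem 4.2]{SV}, with the ``in particular'' clause left as an immediate consequence. Your write-up correctly supplies the routine deduction --- identifying $\Delta=\End_R(M)$ as a prime affine $k$-algebra, module-finite over its center, with $Z(\Delta)=R$ via torsion-freeness, integrality and normality --- and your observation that the Gorenstein hypothesis is not needed is accurate under this paper's definition of an NCCR (which already demands that $\End_R(M)$ be a non-singular $R$-order), even though in Van den Bergh's original formulation the Gorenstein assumption is what makes that characterization equivalent to the others.
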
 

In fact, at the end of their paper Stafford and Van den Bergh raised the question of whether it is enough to only assume that we have {a} maximal Cohen-Macaulay {module giving a} NCR but $R$ is not necessarily Gorenstein (\cite[Question 5.2]{SV}). 

Our first result shows that having rational (isolated) singularity characterizes the existence of NCRs for surface
singularities.  For a Cohen-Macaulay ring $R$, let $\sCM(R)$ denote
the category of first syzygies of some maximal Cohen-Macaulay modules.
{Thus $\sCM(R)$ consists of all $X\in\mod R$ such that there exists an
  exact sequence $0\to X\to P\to Y\to0$ with a projective $R$-module
  $P$ and a maximal Cohen-Macaulay $R$-module $Y$.}

\begin{corollary}\label{rat}
  Let $(R,\mm,k)$ be a local normal domain of dimension two. Consider
  the following:
\begin{enumerate}[\rm(1)]
\item $\sCM(R)$ is of finite type (that is, there exists $M \in
  \mod(R)$ such that $\sCM(R)=\add M$).
\item $R$ has a NCR.
\item $\Cl(R)$ is a finitely generated abelian group.
\item $\KG( R)\otimes_{\Z}\Q$ is a finite dimensional $\Q$-vector
  space (equivalently, $\Cl(R)$ has a finite rank).
\item $\Spec(R)$ has rational singularities.
\end{enumerate}
Then $(1) \Rightarrow (2) \Rightarrow (3) \Rightarrow (4)$. If $R$ is
excellent, henselian and $k$ is algebraically closed, then $(3)
\Rightarrow {(5)}\Rightarrow (1)$.  If in addition $k$ has
characteristic $0$ then $(4) \Rightarrow (5)$.
\end{corollary}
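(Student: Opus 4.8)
The plan is to establish the implications in the order suggested by the statement, drawing on the machinery already built. For $(1)\Rightarrow(2)$, if $\sCM(R)=\add M$ for some $M$, I would take the module $N:=M\oplus R$ and show its endomorphism ring has finite global dimension. The key point is that $\sCM(R)$ is the category of syzygies of maximal Cohen-Macaulay modules, so every MCM module has a syzygy in $\add M$; combined with the fact that over a two-dimensional normal domain the MCM modules are exactly the reflexive modules, this should let me bound projective dimensions of all $E:=\End_R(N)$-modules uniformly. Since $N$ contains $R$ as a summand it is faithful, giving a NCR. The implication $(2)\Rightarrow(3)$ is immediate from Corollary \ref{application1}, since $R$ is a (semi)local normal domain. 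For $(3)\Rightarrow(4)$, I would invoke Proposition \ref{Grothendieck to class}: on a two-dimensional normal domain $\Phi$ consists only of the maximal ideal, so $\langle\DD\rangle$ is generated by $[R/\mm]=[k]$, whence $\KG(R)/\langle\DD\rangle\cong\Z\oplus\Cl(R)$; tensoring with $\Q$ and using that $\langle\DD\rangle$ has rank one shows $\KG(R)\otimes_\Z\Q$ is finite-dimensional precisely when $\Cl(R)$ has finite rank.

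For the geometric implications under the excellence/henselian/algebraically-closed hypotheses, I would rely on the rich structure theory of two-dimensional normal singularities. The implication $(3)\Rightarrow(5)$ I expect to follow from classical results characterizing rational surface singularities: a normal surface singularity over an algebraically closed field is rational if and only if its (local) class group, or equivalently the divisor class group computed on the resolution, is finitely generated. Here one uses that for a rational singularity the exceptional divisor's configuration forces $\Cl(R)$ to be finitely generated, and conversely finite generation of $\Cl(R)$ forces rationality via the theory relating $\Cl(R)$ to the Picard group of the resolution and to $H^1(X,\OO_X)$. The implication $(5)\Rightarrow(1)$ should use that rational surface singularities have finite Cohen-Macaulay type considerations replaced by the statement that $\sCM(R)$ is of finite type; here the relevant input is that rational double points are exactly the ASL (simple) singularities of finite CM type, but for general rational singularities one uses that the category of special Cohen-Macaulay modules, or the syzygy category, is governed by the finite dual graph of the resolution, yielding finitely many indecomposables up to the $\add$-generator $M$.

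The hard part, and the place I would focus most care, is the equivalence $(4)\Rightarrow(5)$ in characteristic zero, since this is where the paper advertises the use of nontrivial algebraic $K$-theory. I expect the argument to run through the statement that finite-dimensionality of $\KG(R)\otimes_\Z\Q$ controls a $K$-theoretic or Chow-theoretic invariant that detects rationality; specifically, one wants to rule out the pathological examples (such as $\C[[x,y,z]]/(x^3+y^3+z^3)$ referenced in \ref{Chowfacts}) where the class group has infinite rank. The characteristic-zero hypothesis is presumably needed to invoke resolution of singularities and the comparison between the algebraic $K$-group and cohomological data on the resolution, likely via a Riemann-Roch or Bloch-type formula relating $\KG(R)\otimes\Q$ to $H^*(X,\OO_X)$.

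The main obstacle I anticipate is the transition from the purely algebraic finiteness of Grothendieck or class groups to the cohomological vanishing $H^i(X,\OO_X)=0$ defining rational singularities: bridging this gap requires identifying the correct invariant of the resolution $f\colon X\to\Spec(R)$ that is simultaneously computed by $\Cl(R)$ (or $\KG(R)\otimes\Q$) and detects $R^if_*\OO_X$. I would structure the proof so that the finite-rank hypothesis is fed into a known rationality criterion for normal surface singularities rather than reproved from scratch, citing the relevant results on the geometric genus and the class group of the resolution.
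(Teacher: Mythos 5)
Your skeleton for $(1)\Rightarrow(2)\Rightarrow(3)\Rightarrow(4)$ and for $(5)\Rightarrow(1)$ matches the paper: $(1)\Rightarrow(2)$ is exactly \cite[Theorem 2.10]{IW1} (your $N=M\oplus R$ sketch is the right idea but the finite-global-dimension bound is the nontrivial content of that citation, not a formality), $(2)\Rightarrow(3)$ is Corollary \ref{application1}, $(3)\Rightarrow(4)$ is immediate, and $(5)\Rightarrow(1)$ is Wunram's finiteness of indecomposable special Cohen--Macaulay modules combined with \cite[Theorems 3.6 and 2.10]{IW1} (your aside about rational double points and finite CM type is a red herring --- general rational surface singularities need not have finite CM type; it is the \emph{special} CM modules that are finite in number).

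The genuine gaps are in the two geometric implications. For $(3)\Rightarrow(5)$ you appeal to a ``classical result'' that rationality is equivalent to the class group being \emph{finitely generated}; the classical statement (Lipman, \cite[17.3]{Lip}) is that rationality is equivalent to $\Cl(R)$ being \emph{finite} (for $R$ excellent, henselian, with algebraically closed residue field). Upgrading the hypothesis from finite to finitely generated is precisely the point that needs an argument: one must rerun Lipman's proof and, in Complement 11.3, replace condition (2) of Artin's contractibility criterion \cite[Theorem 1.7]{Ar} by the condition that the Picard group of the exceptional curve $Z$ is finitely generated. Asserting the strengthened equivalence as classical assumes the conclusion. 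For $(4)\Rightarrow(5)$ you go looking for a Riemann--Roch/Chow-theoretic bridge; that machinery belongs to the higher-dimensional Theorem \ref{S3}, not here. The actual argument is the \emph{same} Lipman--Artin argument again, now with ``$\Pic(Z)$ of finite rank'' in place of ``finitely generated''; characteristic $0$ enters because $H^1(Z,\mathcal O_Z)$ is then a $\Q$-vector space inside $\Pic(Z)$, so finite rank forces it to vanish, whereas in characteristic $p$ it is a locally finite group of rank zero --- which is exactly why $(4)\Rightarrow(5)$ fails over $\overline{\mathbb F_p}$ as the paper's Example notes. Your proposed K-theoretic detour is not developed enough to judge, but it is not needed and is not what makes the implication work.
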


\begin{proof}
  The implication $(1)\Rightarrow(2)$ is \cite[Theorem 2.10]{IW1} and
  $(2)\Rightarrow(3)$ is Corollary \ref{application1}. The implication
  $(3)\Rightarrow(4)$ is trivial.  The implication $(3) \Rightarrow
  (5)$ is essentially \cite[17.3]{Lip}. The proof works almost the
  same, except for the crucial Complement 11.3, where one needs to
  replace condition (2) of Theorem 1.7 in \cite{Ar} by the condition
  that the Picard group of the curve $Z$ is finitely generated.
  Similarly $(4) \Rightarrow (5)$ in characteristic $0$.  The
  statement $(5)\Rightarrow(1)$ follows from the fact that there are
  only finitely many indecomposable special Cohen-Macaulay modules
  (see \cite{Wu}, \cite[Theorems 3.6 and 2.10]{IW1}). Note that the
  result in \cite{Wu} was stated for singularities over complex
  numbers, but the proof also works for our case, the extra
  information we need is the existence of a (minimal)
  desingularization of $\Spec R$, which is known (cf. \cite[Theorem
  4.1]{Lip}). One can bypass the use of  Grauert-Riemenschneider vanishing used in Wunram's proof by the 
  discussion before Theorem 5 in \cite{Gav}.
\end{proof}

\begin{example}
  The implication $(3) \Rightarrow (5)$ in Corollary \ref{rat} really
  requires all the assumptions.  It is not  true when $k$ is not algebraically closed (but
  $R$ is complete): Salmon (\cite{Sal}) showed that
  $k(u)[[x,y,z]]/(x^2+y^3+uz^6)$ is factorial for any field $k$.  The
  condition that $R$ is henselian is also crucial: the ring
  $R=k[x,y,z]_{(x,y,z)}/(x^r+y^s+z^t)$ where $r,s,t$ are pairwise
  prime, is factorial over any field $k$ (\cite[Corollary 10.17]{F}).

  Also, the implication $(4) \Rightarrow (5)$ may fail in positive
  characteristics.  In fact, when $k = \overline {\mathbb F_p}$ the
  class group will always be locally finite.

\end{example}

Now we discuss Question \ref{ratques} in higher dimension. 
{We may assume $R$ is a complete local ring to study Question \ref{ratques} by the following:}

\begin{lemma}\label{locom}
Suppose $M$ gives a NCR {of} $R$ and $\pp \in \Spec(R)$. Then $M_{\pp}$ gives a NCR {of} $R_{\pp}$. If $(R, \mathfrak m)$ is local then the completion $\hat M$ of $M$ gives a NCR {of} $\hat R$.
\end{lemma}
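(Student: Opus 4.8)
The plan is to verify, in each of the two cases (localization at a prime $\pp$ and $\mm$-adic completion), the two defining properties of a NCR separately: that the base-changed module remains faithful, and that the global dimension of its endomorphism ring stays finite. Both base changes $R\to R_\pp$ and $R\to\hat R$ are flat (the latter even faithfully flat), and $M$ is finitely generated over the noetherian ring $R$, so I expect to reduce everything to standard flat-base-change compatibilities together with a single structural input about global dimension over a complete local base.

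First I would dispose of faithfulness and of the identification of endomorphism rings. For a finitely generated module over a noetherian ring the annihilator commutes with flat base change, so $\operatorname{Ann}_{R_\pp}(M_\pp)=\operatorname{Ann}_R(M)_\pp$ and $\operatorname{Ann}_{\hat R}(\hat M)=\operatorname{Ann}_R(M)\otimes_R\hat R$; both vanish because $\operatorname{Ann}_R(M)=0$, so $M_\pp$ and $\hat M$ are faithful (in particular nonzero). In the same vein, since $M$ is finitely presented and the base changes are flat, $\Hom$ commutes with them, giving canonical isomorphisms $\End_R(M)_\pp\cong\End_{R_\pp}(M_\pp)$ and $\End_R(M)\otimes_R\hat R\cong\End_{\hat R}(\hat M)$. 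Writing $E:=\End_R(M)$, this reduces the problem to bounding the global dimensions of the central localization $E_\pp$ and of the completion $\hat E:=E\otimes_R\hat R$.

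It then suffices to show $\gl E_\pp\le\gl E$ and $\gl\hat E\le\gl E$. For the localization this is routine: $E$ is noetherian, every finitely generated $E_\pp$-module is the localization of a finitely generated $E$-module, and $\Ext$ commutes with the flat localization on finitely generated modules, so $\Ext^{i}_{E_\pp}$ vanishes for $i>\gl E$ and hence $\gl E_\pp\le\gl E$. The completion case is the main point. Here I would use that $\hat E$ is module-finite over the complete local ring $\hat R$, hence semiperfect, so its global dimension is detected by the semisimple quotient: $\gl\hat E=\sup\{\,i:\Ext^{i}_{\hat E}(\bar E,\bar E)\ne0\,\}$, where $\bar E:=\hat E/\operatorname{rad}\hat E$. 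Because $R$ is local we have $\bar E\cong E/\operatorname{rad}E$, a module of finite length, so it is unaffected by completion; and since $\bar E$ is finitely generated and $\hat R$ is flat, $\Ext^{i}_{\hat E}(\bar E,\bar E)\cong\Ext^{i}_{E}(\bar E,\bar E)\otimes_R\hat R$. The right-hand side vanishes for $i>\gl E$, whence $\gl\hat E\le\gl E<\infty$.

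The step I expect to be the real obstacle is the completion bound, specifically the reduction of $\gl\hat E$ to $\Ext$ between copies of the semisimple quotient $\bar E$. This rests on the existence of minimal projective resolutions (projective covers) over the semiperfect noetherian algebra $\hat E$, together with the identification $\hat E/\operatorname{rad}\hat E\cong E/\operatorname{rad}E$, which is exactly where the locality of $R$ is used; once these are in place, flatness of $\hat R$ propagates the vanishing of $\Ext$ from $E$ to $\hat E$. This is a standard fact in the theory of module-finite algebras (orders), and I would either invoke it directly or spell out the minimal-resolution argument. By contrast the localization case requires no such structure, since one can clear denominators to descend finitely generated modules from $E_\pp$ to $E$.
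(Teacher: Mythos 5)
Your proposal is correct, but for the completion half it takes a genuinely different route from the paper. The paper treats both base changes uniformly: writing $\Lambda=\End_R(M)$, it uses that $\gl\Lambda=\sup_{X\in\fl\Lambda}\pd_\Lambda X$ (global dimension is already detected on finite-length modules) and that $S\otimes_R-$ is an exact \emph{dense} functor $\mod\Lambda\to\mod(S\otimes_R\Lambda)$ for $S=R_\pp$, respectively $\fl\Lambda\to\fl(S\otimes_R\Lambda)$ for $S=\hat R$ (finite-length modules over a local ring are already complete); exactness plus density immediately gives $\gl(S\otimes_R\Lambda)\le\gl\Lambda$. Your localization argument (clear denominators, $\Ext$ commutes with flat localization) is the same in substance as the paper's. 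For the completion, however, you instead invoke semiperfectness of $\hat E$ over the complete local ring $\hat R$, minimal projective resolutions, the identity $\gl\hat E=\pd_{\hat E}\bigl(\hat E/\operatorname{rad}\hat E\bigr)$, the identification $\hat E/\operatorname{rad}\hat E\cong E/\operatorname{rad}E$, and flat base change for $\Ext$. This all checks out -- and you are right that semiperfectness genuinely needs the completeness of the base (it can fail for $E$ itself over a non-complete local ring), so your transfer of the $\Ext$-vanishing from $E$ to $\hat E$ is exactly where the work is. What the paper's approach buys is economy: the single observation that global dimension is computed on finite-length modules replaces the whole semiperfect/minimal-resolution apparatus and makes the two cases literally one argument; what your approach buys is a more self-contained verification, including the faithfulness of $M_\pp$ and $\hat M$ and the identification of the endomorphism rings under flat base change, which the paper leaves implicit.
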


\begin{proof}
Let $\Lambda$ be a module-finite $R$-algebra. 
{We denote by $\fl\Lambda$ the category of $\Lambda$-modules of finite length.}
We only have to show $\gl(S\otimes_R\Lambda)\le\gl\Lambda$ for $S:=R_{\pp}$ or $S:=\hat R$.

When $S=R_\pp$ (respectively, $S=\hat R$), there is an exact dense functor $S\otimes_R-:\mod\Lambda\to\mod(S\otimes_R\Lambda)$ (respectively, $S\otimes_R-:\fl\Lambda\to\fl(S\otimes_R\Lambda)$).
In particular we have $\pd_{S\otimes_R\Lambda}(S\otimes_RM)\le\pd_{\Lambda}M$.
Since $\gl\Lambda=\sup_{X\in\mod\Lambda}\{\pd_\Lambda X\}=\sup_{X\in\fl\Lambda}\{\pd_\Lambda X\}$, the assertion follows.
\end{proof}

The next result shows that NCRs also behave well under separable field
extensions.

\begin{lemma}\label{field}
Let $R$ be a commutative {algebra over a field $K$} and $L$ a separable {field} extension of $K$.
If $M$ gives a NCR of $R$, then $L\otimes_KM$ gives a NCR of $L\otimes_KR$.
\end{lemma}

\begin{proof}
Let $E:=\End_R(M)$.
Clearly we have {$\End_{L\otimes_KR}(L\otimes_KM)=L\otimes_KE$}.
We only have to show that $L\otimes_KE$ has finite global dimension.
For any $X\in\mod E$, clearly the $L\otimes_KE$-module $L\otimes_KX$ has 
finite projective dimension.
It is enough to show that any simple $L\otimes_KE$-module $S$ is a direct summand of $L\otimes_KX$ for some $X\in\mod E$.

We regard $S$ as an $E$-module, and we take a simple {$E$-submodule} $X$ of $S$.
Then we have $LX=S$, and we have a surjection $L\otimes_KX\to S$ of
$L\otimes_KE$-modules sending $l\otimes x\to lx$.
Since $L$ is a separable extension of $K$, we have that $L\otimes_KX$ is a
semisimple $L\otimes_KE$-module \cite[Corollary 7.8(ii)]{CR}. Thus $S$ is a direct summand of $L\otimes_KX$ and we complete the proof.
\end{proof}

For a scheme $X$ let $\CH_i(X)$ denote the Chow group of algebraic
cycles of dimension $i$ and $\CH_*(X)$ the total Chow group. We shall
need the following well-known facts, see \cite[Exercise II.6.3]{Ha2},
\cite{Fu} and \cite{Ku1}:

\begin{theorem}\label{Chowfacts}
  Let $A$ be the {homogeneous} coordinate ring of a projective variety
  $X$ over a field $k$ and $R$ be the local ring of $A$ at the
  irrelevant ideal. Let $h$ denote the class in $\CH(X)_{\mathbb Q}$ of
  a hyperplane section on $X$.
\begin{enumerate}[\rm(1)]
\item There is an exact sequence
$$\ses{\mathbb Z}{\Cl(X)}{\Cl(R)}  $$
where the first map sends $1$ to $h$.
\item 
  We have the following isomorphisms of $\mathbb Q$-vector spaces:
$$\CH_*(X)_{\mathbb Q}/(h\cap \CH_*(X)_{\mathbb Q}) \cong \CH_*(A)_{\Q}
\cong \CH_*(R)_{\mathbb Q} \cong \KG(R)_{\mathbb Q} $$ where the first
two are graded isomorphisms.
\end{enumerate}
\end{theorem}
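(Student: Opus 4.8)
The plan is to deduce both statements from three standard ingredients: the excision/localization sequence for the affine cone $C=\Spec A$, the $\mathbb{G}_m$-action coming from the grading, and the singular Riemann--Roch isomorphism. Throughout I view $\Spec R$ as the localization of $C$ at its vertex $v$, the closed point cut out by the irrelevant ideal $\mm_+$, and I write $U=C\setminus\{v\}$ for the punctured cone, with projection $\pi\colon U\to X$ exhibiting $U$ as the complement of the zero section in the total space $\mathbb{L}$ of $\mathcal{O}_X(-1)$.

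For part (1), I would first use that $\mathbb{L}\to X$ is a line bundle, so pullback gives $\Cl(\mathbb{L})\cong\Cl(X)$; the localization sequence for the zero section $X_0\hookrightarrow\mathbb{L}$, a prime divisor whose self-intersection is $c_1(\mathcal{O}_X(-1))=-h$, then produces an exact sequence $\mathbb{Z}\xrightarrow{\cdot h}\Cl(X)\to\Cl(U)\to0$. Since $\dim C\ge 2$ (assuming $\dim X\ge 1$) the vertex has codimension at least two, so $\Cl(C)\cong\Cl(U)$; and since $A$ is graded, every divisorial ideal is equivalent to a homogeneous one, whence localizing at $\mm_+$ yields $\Cl(C)=\Cl(A)\cong\Cl(R)$. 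Finally the map $1\mapsto h$ is injective because $h$ is ample, hence non-torsion. Assembling these gives the exact sequence of (1); this is precisely the content of \cite[Exercise II.6.3]{Ha2}.

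For part (2), I would run exactly the same localization in every codimension, working rationally. The line-bundle computation gives degreewise $\CH_j(U)\cong\CH_{j-1}(X)/(h\cap\CH_j(X))$, so summing over $j$ identifies $\CH_*(U)$ with $\CH_*(X)/(h\cap\CH_*(X))$, a graded isomorphism up to the natural shift in dimension by one. The vertex contributes only in degree zero, where its class vanishes because $v$ lies on a ruling $\cong\mathbb{A}^1\subset C$, on which every point is rationally trivial; hence $\CH_*(C)\cong\CH_*(U)$ and the first isomorphism follows. For the second isomorphism I would invoke the $\mathbb{G}_m$-action: every subvariety of $C$ is rationally equivalent to its flat limit under the action, a $\mathbb{G}_m$-invariant cycle (a cone) that contains $v$; passing to the colimit over neighborhoods of $v$ shows the localization map $\CH_*(C)\to\CH_*(R)$ is an isomorphism, which is manifestly graded. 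This is the graded-localization input of \cite{Ku1}. The third isomorphism $\CH_*(R)_{\mathbb{Q}}\cong\KG(R)_{\mathbb{Q}}$ is the Todd-class (singular Riemann--Roch) transformation of \cite[Ch.~18]{Fu}, applied to $\Spec R$, which is essentially of finite type over $k$; this transformation is only filtered, not graded, which is exactly why the statement asserts the graded property for the first two isomorphisms alone.

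The deepest single input is the Riemann--Roch isomorphism, but since it is quoted wholesale, the genuine work is the bookkeeping: pinning down the degree shift in the cone sequence, verifying the vanishing of the vertex class, and---most delicately---justifying $\CH_*(A)_{\mathbb{Q}}\cong\CH_*(R)_{\mathbb{Q}}$ through the $\mathbb{G}_m$-degeneration argument, where one must check that the flat limit remains inside the colimit system of neighborhoods of $v$ and that the degeneration yields an honest rational equivalence. I expect this last point to be the main obstacle, and I would lean on \cite{Ku1} for the precise localization statement rather than reprove it from scratch.
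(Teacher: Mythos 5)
Your proposal is correct and follows essentially the same route as the paper, which itself gives no proof beyond citing \cite[Exercise II.6.3]{Ha2} for (1), \cite[Lemma 4.1 and Theorem 1.3]{Ku1} for the first two isomorphisms in (2), and the singular Riemann--Roch transformation of \cite[\S 18]{Fu} for the third. Your fleshed-out localization/excision computations on the punctured cone, the vanishing of the vertex class, the $\mathbb{G}_m$-degeneration for $\CH_*(A)\cong\CH_*(R)$, and the remark that only the Riemann--Roch step fails to be graded all match the intended arguments behind those references (the paper's own substitute for Kurano's smoothness hypothesis is the exact sequence recalled in the proof of Proposition \ref{support}).
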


The second isomorphism in Theorem \ref{Chowfacts}(2) is \cite[Lemma
4.1]{Ku1} and the first isomorphism is only stated for $X$ smooth in
\cite[Theorem 1.3]{Ku1}. However we notice that the isomorphism holds
without assuming $X$ smooth, \emph{cf.} the proof of Proposition
\ref{support}. \medskip


Before moving on  we
recall  the definition of Serre's conditions $(S_n)$. For a non-negative integer $n$, $M$ is said to satisfy $(S_n)$ if:
$$ \depth_{R_p}M_p \geq \min\{n,\dim(R_p)\} \ \forall p\in \Spec(R)$$

\begin{proposition}\label{aus}
  Let $R$ be a normal local ring. Let $M$ be a finitely generated
  faithful $R$-module. {Then $\NG(M)$ is a closed subscheme of
    $\Spec(R)$ of codimension at least $2$ .  If in addition we assume}
  that $\End_R(M)$ is $(S_3)$, then $M$ is locally free outside  the
  singular locus $\Sing(R)$. In particular, $\NG(M) \subseteq
  \Sing(R)$.

  \noindent If moreover $M$ gives a NCR and $\dim \Sing(R) \leq 1$,
  then $\KG(R)$ is a finitely generated abelian group.
\end{proposition}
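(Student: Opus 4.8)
The plan is to prove the three assertions in turn, with the whole weight of the proposition resting on a local, Auslander-type freeness criterion that converts the $(S_3)$-condition on $\End_R(M)$ into local freeness on the regular locus. First I would treat $\NG(M)$. Since $R$ is a normal local ring it is a domain, and $\Cok(a_M)$ is finitely generated, so $\NG(M)=\supp\Cok(a_M)$ is closed. For the codimension bound it suffices to see that $M_\pp$ is a generator of $\mod R_\pp$ whenever $\operatorname{ht}\pp\le 1$: localising the annihilator shows $M_\pp$ is faithful, and $R_\pp$ is then a field or a discrete valuation ring, over which a finitely generated faithful module is torsion-free of positive rank and hence has a free direct summand. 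Thus $R_\pp\in\add M_\pp$, so $\pp\notin\NG(M)$, and every prime of $\NG(M)$ has height at least two.

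For the second assertion it is enough to prove that $M_\pp$ is free whenever $R_\pp$ is regular, for then $M_\pp$ is a generator off $\Sing(R)$ and $\NG(M)\subseteq\Sing(R)$. Since $(S_3)$ localises and $\End_R(M)_\pp=\End_{R_\pp}(M_\pp)$, I reduce to the following criterion, which is the crux: \emph{if $S$ is a regular local ring and $N$ is a reflexive $S$-module with $\End_S(N)$ of type $(S_3)$, then $N$ is free.} Reflexivity of $N$ is genuinely needed and must be in force here, since for instance $\End_R(\mm)=R$ is $(S_3)$ while $\mm$ is not free. I would prove the criterion by induction on $d=\dim S$. For $d\le 2$, reflexive equals $(S_2)$ equals maximal Cohen-Macaulay, and over a regular local ring a maximal Cohen-Macaulay module is free by Auslander-Buchsbaum. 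For $d\ge 3$, applying the inductive hypothesis at every non-maximal prime---where $\dim$ drops and both reflexivity and $(S_3)$ persist---shows that $N$ is free on the punctured spectrum of $S$.

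The hard step, and the main obstacle, is then to deduce freeness from freeness on the punctured spectrum together with $\depth_S\End_S(N)\ge 3$ (which $(S_3)$ forces when $d\ge 3$). Here I would argue by contradiction: assuming $N$ non-free, I would compute $\depth_S\End_S(N)$ by applying $\Hom_S(N,-)$ to a reflexive co-presentation $0\to N\to S^{c}\xrightarrow{\psi} S^{c'}$ with $W=\operatorname{im}\psi$ torsion-free, and track depths along the resulting long exact sequence, using that freeness on the punctured spectrum makes $\Ext^{\ge 1}_S(N,N)$ of finite length. The non-freeness then forces $H^2_{\mm}(\End_S(N))\neq 0$, i.e. $\depth_S\End_S(N)\le 2$, contradicting $(S_3)$. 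One checks on $N=\Omega^2 k$ over $k[[x,y,z]]$ that indeed $\depth_S\End_S(N)=2$, which both confirms the mechanism and shows the bound is sharp; this local freeness is exactly what yields $\NG(M)\subseteq\Sing(R)$.

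Finally, the third assertion follows at once. By the second assertion $M$ is locally a generator outside the closed subscheme $\Sing(R)$, which has dimension at most one by hypothesis, and $M$ gives a NCR; so Corollary \ref{application2} applies and $\KG(R)$ is a finitely generated abelian group. Equivalently, one invokes Theorem \ref{main} directly: since $\NG(M)\subseteq\Sing(R)$ has dimension at most one and $R$ is local, $\NG(M)$ is finite, $\langle\CC_M\rangle$ is generated by the finitely many classes $[R/\pp]$ with $\pp\in\NG(M)$ and is thus finitely generated, and $\KG(R)/\langle\CC_M\rangle$ is finitely generated, whence so is $\KG(R)$.
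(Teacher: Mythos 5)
Your overall architecture matches the paper's: the first assertion is handled exactly as in the paper (localize at height $\le 1$ primes, use that a faithful finitely generated module over a field or DVR has a free summand), and the third assertion is, as in the paper, an immediate application of Corollary \ref{application2} (equivalently Theorem \ref{main}). The divergence, and the problem, is in the second assertion. The paper does not prove the key freeness criterion: it quotes \cite[Corollary 2.9]{HW2} (a regular local ring $S$ and a \emph{reflexive} module $N$ with $\End_S(N)$ of type $(S_3)$ force $N$ free). You correctly identify this as the crux, but your proof of it has a genuine gap: after reducing by induction to $N$ free on the punctured spectrum with $\depth_S\End_S(N)\ge 3$, the decisive implication ``$N$ non-free $\Rightarrow H^2_{\mm}(\End_S(N))\neq 0$'' is merely asserted. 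A straightforward depth chase along $0\to N\to S^c\to W\to 0$ only yields the a priori bound $\depth_S\End_S(N)\ge 2$ (which holds for \emph{every} reflexive module over a normal domain of dimension $\ge 2$); producing the nonvanishing local cohomology class that obstructs $\depth\ge 3$ is the entire content of the Huneke--Wiegand result, and the known proofs go through the depth formula for $N\otimes_S N^*$ together with Auslander--Lichtenbaum rigidity of $\operatorname{Tor}$ over regular local rings --- an input your sketch never invokes. As written, the hard step is not proved.

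There is a second, smaller gap in the same part: you insist (rightly) that reflexivity of $N$ ``must be in force,'' but the hypothesis only gives that $M$ is faithful, and you never justify passing to a reflexive module. The paper's argument here is that $(S_3)$ forces $\End_R(M)$ to be reflexive, whence $\End_R(M)\cong\End_R(M^{**})$, so one may replace $M$ by $M^{**}$ before localizing at primes in the regular locus. Without this step (or some substitute), your reduction ``$\End_{R_\pp}(M_\pp)$ is $(S_3)$ and $M_\pp$ is reflexive over the regular local ring $R_\pp$'' does not follow from the stated hypotheses. In short: parts one and three are fine and agree with the paper; part two identifies the right lemma but neither cites it nor supplies a complete proof of it.
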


\begin{proof}
  The first assertion follows from the proof of \ref{application1}. The last
  assertion follows from Corollary \ref{application2}.  {Assume now
    that $\End_R(M)$ is $(S_3)$.}  The assumption implies that
  $\End_R(M)$ is reflexive as an $R$-module.  Thus $\End_R(M^{**})
  \cong \End_R(M)$, and we may assume that $M$ is reflexive.  Now what
  we need to prove follows from the fact that if $R$ is a regular
  local ring and $M$ is a reflexive $R$-module, then $\End_R(M)$ is
  $(S_3)$ if and only if $M$ is free (see \cite[Corollary 2.9]{HW2}).
\end{proof}

The following result is essentially due to Roitman \cite{Roit}. We
give a proof for sake of completeness. Given a variety $X$,
$\CH_0(X)_{\mathbb Q}$ is said to be \emph{supported} in dimension $l$
if there exists an $l$-dimensional closed subvariety $Z$ of $X$ such
that the proper pushforward map $\CH_0(Z)_{\mathbb Q} \rightarrow
\CH_0(X)_{\mathbb Q}$ is surjective.

\begin{lemma}\label{Chowlem} 
  Let $X$ be a smooth projective variety over $\C$.  If
  $\CH_0(X)_{\mathbb Q}$ is supported in dimension $l$, then
  $H^i(X,\mathcal O_X) =0$ for all $i > l$.
\end{lemma}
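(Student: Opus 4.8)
The plan is to prove that if $\CH_0(X)_{\mathbb Q}$ is supported in dimension $l$, then $H^i(X,\mathcal O_X)=0$ for $i>l$, by using the standard correspondence-theoretic technique going back to Mumford and Bloch--Srinivas, exploiting the action of algebraic cycles (correspondences) on cohomology. The key input is the decomposition of the diagonal. First I would observe that the supportedness hypothesis gives a closed subvariety $Z\subset X$ of dimension $l$ with $\CH_0(Z)_{\mathbb Q}\to\CH_0(X)_{\mathbb Q}$ surjective. Localizing at the generic point and using a standard spreading-out/specialization argument (the Bloch--Srinivas method), I would produce a rational decomposition of the diagonal in $\CH_{\dim X}(X\times X)_{\mathbb Q}$ of the form $[\Delta_X] = \Gamma_1 + \Gamma_2$, where $\Gamma_1$ is supported on $Z'\times X$ for some divisor or lower-dimensional piece and $\Gamma_2$ is supported on $X\times Z$ with $\dim Z=l$. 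Concretely, the argument runs: over the function field $k(X)$ the zero-cycle given by the generic point of the diagonal becomes rationally equivalent to a cycle supported on $Z_{k(X)}$, and clearing denominators and taking closures yields the decomposition.

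The second step is to let these correspondences act on the cohomology $H^*(X,\mathcal O_X) = \bigoplus_i H^i(X,\mathcal O_X)$, equivalently on the Hodge pieces $H^{0,i}(X) = H^i(X,\mathcal O_X)$. The diagonal $[\Delta_X]$ acts as the identity on all of $H^*(X)$. The term $\Gamma_2$, being supported on $X\times Z$ with $\dim Z = l$, factors its action through the cohomology of (a resolution of) $Z$, hence through classes pulled back from an $l$-dimensional variety; such a correspondence kills $H^{0,i}$ for $i>l$ since $H^{0,i}$ of an $l$-dimensional variety vanishes for $i>l$ by dimension reasons. The term $\Gamma_1$, being supported on $Z'\times X$ where $Z'$ is a proper subvariety, factors through cohomology supported on a lower-dimensional piece of the source and, crucially, acts as zero on $H^{0,i}(X)=H^i(X,\mathcal O_X)$ for $i>0$: this is where one uses that a correspondence supported on $D\times X$ with $D$ a proper closed subvariety acts trivially on the holomorphic forms $H^0(X,\Omega^i)$ and dually on $H^i(X,\mathcal O_X)$ for $i>0$, because the action factors through the Gysin pushforward from $D$, which for the $(0,i)$-component with $i>0$ vanishes. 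Combining, the identity on $H^i(X,\mathcal O_X)$ equals the sum of two maps each of which is zero for $i>l$, forcing $H^i(X,\mathcal O_X)=0$.

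I expect the main obstacle to be the careful bookkeeping in the two steps where correspondences are shown to act trivially or factor through low-dimensional cohomology. In particular, making precise that the action of an algebraic correspondence on $H^i(X,\mathcal O_X)$ is well-defined and compatible with the cycle-class-to-Hodge-realization requires invoking that over $\C$ algebraic cycles act on singular cohomology preserving the Hodge decomposition, so that a cycle of the correct codimension induces a morphism of Hodge structures; one must then track exactly which Hodge components survive. The genuinely delicate point is verifying that the piece $\Gamma_1$ supported over a proper subvariety of the first factor annihilates $H^i(X,\mathcal O_X)$ for $i>0$ --- this rests on the fact, essentially due to Mumford, that holomorphic forms pulled back via such a correspondence vanish, and one should phrase this using a resolution of singularities of the support together with functoriality of the correspondence action. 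Since $X$ is smooth projective over $\C$, resolution of the auxiliary subvarieties $Z$ and $Z'$ is available, and the realization of Chow-theoretic correspondences on Hodge structures is standard, so these technical inputs are all at hand; the proof is therefore a matter of assembling them in the right order.
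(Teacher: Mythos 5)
Your proposal is correct and follows essentially the same route as the paper: a Bloch--Srinivas decomposition of the diagonal $[\Delta_X]=\Gamma_1+\Gamma_2$ with one piece supported over the $l$-dimensional $Z$ (whose action factors through the cohomology of a resolution of $Z$ and hence kills $H^{0,i}$ for $i>l$) and the other supported over a divisor $D$ in the first factor (whose action factors through the Gysin pushforward from a resolution of $D$, a morphism of Hodge structures of bidegree $(1,1)$ whose image therefore misses $H^{0,i}$). The only cosmetic difference is the labeling of the two pieces and your slightly loose ``dually'' phrasing, but the operative reason you give --- the Gysin map's bidegree --- is exactly the paper's argument.
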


\begin{proof}
  By localization there is an $l$-dimensional subscheme $j : Z
  \hookrightarrow X$ such that $\CH_0(X-Z)_{\Q} = 0$. Let $d$ be the
  dimension of $X$ over $\C$. By a result of Bloch-Srinivas on the
  decomposition of the diagonal \cite[Proposition 1]{BS}, there is a
  decomposition $\Delta_X = \Gamma_1 + \Gamma_2 \in \CH_d(X \times
  X)_{\Q}$. Here $\Delta_X$ is the class of the diagonal inside
  $\CH_d(X \times X)_\Q$, $\Gamma_1$ is a cycle supported on $X \times
  Z$ and $\Gamma_2$ is a cycle supported on $D \times X$ for some
  divisor $D$ inside $X$. Let's write $f : \widetilde{D} \rightarrow
  D$ for a resolution of singularities of $D$ and $g : \widetilde{Z}
  \rightarrow Z$ for a resolution of singularities of $Z$. Then the
  contravariant actions of $\Gamma_1$ and $\Gamma_2$ on $H^i(X,\Q)$
  are morphisms of Hodge structures. The morphism $\Gamma_1^* :
  H^i(X,\Q) \rightarrow H^i(X,\Q)$ factors through $j^* : H^i(X,\Q)
  \rightarrow H^i(\widetilde{Z},\Q)$ and the morphism $\Gamma_2^* :
  H^i(X,\Q) \rightarrow H^i(X,\Q)$ factors through the Gysin morphism
  $f_* : H^{i-2}(\widetilde{D},\Q) \rightarrow H^i(X,\Q)$. In
  particular, $\Gamma_1$ acts trivially on $H^i(X,\mathcal O_X)$ for
  $i>l$. Now, since $f_*$ is a morphism of Hodge structures of
  bidegree $(1,1)$, it follows that the intersection of the image of
  $f_*$ with $H^i(X,\mathcal O_X) = H^{0,i}(X)$ is zero. Thus, if
  $i>l$ and if $\alpha$ is any cohomology class in $H^i(X,\mathcal
  O_X)$, we have
  $$\alpha = \Delta_X^*\alpha = \Gamma_1^*\alpha + \Gamma_2^*\alpha =
  0,$$ i.e.  $H^i(X,\mathcal O_X)=0$.
\end{proof}

\begin{proposition} \label{support} Let $k$ be a field, let $R$ be a
  standard graded algebra over $k$, i.e. a graded Noetherian ring with
  $R_0=k$ and $R = R_0[R_1]$, of dimension at least $3$. Let
  $\mathfrak{m} := \bigoplus_{i>0} R_i$, $X = \Proj R$ and let $Z$ be
  a closed subscheme of codimension at least $2$ in $\Spec R$. Let
  $\CC$ be the subcategory of $\mod R$ generated by the finitely
  generated $R$-modules $M$ with $\supp M \subseteq Z$.  Assume
  $K_0(R)/\langle \CC \rangle$ is finitely generated. Then
  $\CH_0(X)_\Q$ is supported in codimension $1$.
\end{proposition}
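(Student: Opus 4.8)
The plan is to convert the hypothesis into the finite dimensionality of a single Chow group of an open subset of the affine cone $\Spec R$, and then transport this down the cone projection to control $\CH_0(X)_\Q$. First observe that $\CC$ is the Serre subcategory of $\mod R$ consisting of modules supported on $Z$, so by Proposition \ref{quotient Grothendieck} together with the equivalence $(\mod R)/\CC\simeq\coh(\Spec R\setminus Z)$ we obtain $\KG(R)/\langle\CC\rangle\cong\KG(\coh(\Spec R\setminus Z))$. Writing $U:=\Spec R\setminus Z$, the hypothesis says $\KG(\coh U)$ is finitely generated, and the Riemann--Roch isomorphism $\KG(\coh U)_\Q\cong\CH_*(U)_\Q$ of \cite{Fu} (which needs no smoothness) shows that $\CH_*(U)_\Q$, and in particular $\CH_1(U)_\Q$, is a finite dimensional $\Q$-vector space.

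Next I would set up the cone geometry. Let $\pi:\Spec R\setminus\{\mm\}\to X$ be the canonical projection; it is the complement of the zero section in the total space of $\OO_X(-1)$, hence a $\mathbb{G}_m$-bundle. Put $Z':=\overline{\pi(Z\setminus\{\mm\})}\subseteq X$. The hypothesis $\operatorname{codim}_{\Spec R}Z\ge 2$ gives $\dim Z'\le\dim Z\le\dim X-1$, so $Z'$ is a proper closed subvariety of $X$; choosing a nonzero homogeneous form vanishing on $Z'$ cuts out a hypersurface section $W$ with $Z'\subseteq W\subsetneq X$. Its cone $C(W)\subseteq\Spec R$ is closed and contains $Z$ (because $Z\setminus\{\mm\}\subseteq\pi^{-1}(Z')\subseteq\pi^{-1}(W)$), so $V:=\Spec R\setminus C(W)=\pi^{-1}(X\setminus W)$ satisfies $V\subseteq U$. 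Flat pullback along this open immersion is surjective on Chow groups, so $\CH_1(V)_\Q$ is a quotient of $\CH_1(U)_\Q$ and is therefore finite dimensional.

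Now $\pi$ restricts to a $\mathbb{G}_m$-bundle $V\to X\setminus W$. Since $W$ is a hypersurface section, $\OO_X(1)$ restricts to a torsion class on $X\setminus W$, so cupping with the hyperplane class is rationally zero there; the localization and homotopy-invariance sequences for this bundle then degenerate to an isomorphism $\CH_0(X\setminus W)_\Q\cong\CH_1(V)_\Q$, which is finite dimensional. By the localization sequence $\CH_0(W)_\Q\to\CH_0(X)_\Q\to\CH_0(X\setminus W)_\Q\to 0$ there are then finitely many closed points $p_1,\dots,p_n\in X\setminus W$ whose classes, together with the image of $\CH_0(W)_\Q$, span $\CH_0(X)_\Q$. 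Taking $D:=W\cup W''$, where $W''$ is any hypersurface section through $p_1,\dots,p_n$, yields a codimension-one closed subset for which $\CH_0(D)_\Q\to\CH_0(X)_\Q$ is surjective, i.e. $\CH_0(X)_\Q$ is supported in codimension $1$.

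The Riemann--Roch and localization inputs are standard and imported wholesale, so the real work is the middle step: one must run the cone/$\mathbb{G}_m$-bundle comparison between the Chow groups of $\Spec R$ and of $X$ \emph{without} assuming $X$ smooth (this is precisely the computation underlying Theorem \ref{Chowfacts}(2)), keep careful track of the degree shift in the resulting exact sequence, and---most importantly---exploit $\operatorname{codim}_{\Spec R}Z\ge 2$ to force $Z'\subsetneq X$. That strict containment is exactly what produces a divisor $W$ with $Z\subseteq C(W)$, and hence the crucial inclusion $V\subseteq U$ that feeds the finiteness from $U$ into $\CH_0(X\setminus W)_\Q$.
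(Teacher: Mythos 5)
Your proof is correct, and it reorganizes the argument in a genuinely different way from the paper. The paper works globally: it applies Fulton's Riemann--Roch to $\Spec R$ itself, deduces that $\CH_1(R)_{\Q}$ is generated by $\CH_1(Z)_{\Q}$ together with finitely many $1$-cycles (which it absorbs into an enlarged $Z$), and then feeds this into Kurano's exact sequence $\CH_1(X)\to\CH_0(X)\to\CH_1(R)\to 0$, whose first map is capping with $c_1(\OO_X(1))$; the conclusion there is that $\CH_0(X)_{\Q}$ is supported on the union of a hyperplane section with the image $Y$ in $X$ of the punctured cone over $Z$. You localize first: you identify $\KG(R)/\langle\CC\rangle$ with $\KG$ of the open complement $U=\Spec R\setminus Z$ (via Proposition \ref{quotient Grothendieck} and Serre's description of $\coh(U)$ as a quotient category, equivalently the right-exact localization sequence), apply Riemann--Roch on $U$ to get $\dim_{\Q}\CH_1(U)_{\Q}<\infty$, and then run the zero-section/homotopy-invariance computation only over $X\setminus W$, where $\OO_X(1)$ is torsion, so that the $c_1$-term dies rationally and the sequence collapses to an isomorphism $\CH_0(X\setminus W)_{\Q}\cong\CH_1(V)_{\Q}$. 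The ingredients are the same (Riemann--Roch without smoothness, localization, homotopy invariance, the self-intersection formula for the zero section of $\OO_X(\pm1)$), but your version avoids quoting Kurano's sequence wholesale and isolates cleanly where the codimension $\ge 2$ hypothesis enters: it forces $Z'\subsetneq X$, hence the existence of the enclosing hypersurface $W$ and the crucial inclusion $V\subseteq U$. Two minor remarks: producing $W\subsetneq X$ with $Z'\subseteq W$ requires a homogeneous form in $I(Z')$ lying in no minimal prime of $R$ of maximal dimension (automatic since $\dim Z'<\dim X$, and immediate when $R$ is a domain as in the applications); and once $\CH_0(X\setminus W)_{\Q}$ is finite dimensional you do not strictly need the second hypersurface $W''$, since $W\cup\{p_1,\dots,p_n\}$ already has dimension at most $\dim X-1$, though packaging the points into a divisor is harmless and matches how the conclusion is used in Theorem \ref{S3}.
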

\begin{proof}
  By Riemann-Roch \cite[\S 18]{Fu}, we have an isomorphism $\tau_X :
  \KG(R)_{\mathbb Q} \rightarrow \CH_*(R)_{\mathbb Q}$ which is
  covariant with respect to proper morphisms. The subgroup $\langle
  \CC \rangle$ of $\KG(R)$ is included in the image of $\KG(Z)$ inside
  $\KG(R)$ and it follows that $\KG(R)$ is generated by $\KG(Z)$ via
  the natural inclusion $Z \hookrightarrow \Spec R$ and by finitely
  many classes. Thus $\CH_1(R)_{\mathbb Q}$ is generated by
  $\CH_1(Z)_{\mathbb Q}$ and by finitely many $1$-cycles.  Up to adding
  finitely many components of codimension $\geq 2$ in $\Spec R$ to
  $Z$, we may even assume that $\CH_1(R)_{\mathbb Q}$ is supported on
  $Z$.

  In the proof of \cite[Theorem 1.3]{Ku1}, Kurano establishes the
  existence of the following exact sequence for $v>0$ (see
  \cite[(4.16)]{Ku1} and notice that no smoothness assumption on $X$
  is necessary for (4.16) to hold):
  $$\CH_v(X) \rightarrow \CH_{v-1}(X) \rightarrow \CH_{v}(R) \rightarrow
  0.$$ The map on the left is given by intersecting with
  $c_1({\mathcal O}_X(1))$ and the second map is the composite
  $$\CH_{v-1}(X) \stackrel{\eta^*}{\longrightarrow}
  \CH_v(\widetilde{X}-\{t\}) \stackrel{(j^*)^{-1}}{\longrightarrow}
  \CH_v(\widetilde{X}) \stackrel{k^*}{\longrightarrow} \CH_v(R).$$ Here
  $\widetilde{X}$ is the projective cone over $X$, $\{t\}$ is the
  vertex of $\widetilde{X}$ and $\Spec R = \widetilde{X} - X$ is the
  affine cone over $X$. We refer to \cite{Ku1} for more details.
  Important to us is that $k : \Spec R \rightarrow \widetilde{X}$ and
  $j : \widetilde{X}-\{t\} \rightarrow \widetilde{X}$ are open
  immersions and that $\eta : \widetilde{X}-\{t\} \rightarrow X$ is a
  smooth $\mathbb{A}_k^1$-bundle. In particular these three morphisms
  are flat. Let $\widetilde{Z}$ be the closure of $Z$ inside
  $\widetilde{X}$ and let $Y$ be the image (closed by definition) in
  $X$ of $\widetilde{Z}|_{\widetilde{X}-\{t\}}$ via $\eta$.  By
  definition of flat pullbacks for Chow groups, we see that if
  $\CH_1(R)_{\mathbb Q}$ is supported on $Z$, then the composite map
  $\CH_0(Y)_{\mathbb Q} \rightarrow \CH_0(X)_{\mathbb Q}
  \stackrel{k^*(j^*)^{-1}\eta^*}{\longrightarrow} \CH_1(R)_{\mathbb Q}$
  is surjective. It follows from the short exact sequence above that
  $\CH_0(X)_{\mathbb Q}$ is supported on the union of $Y$ with a
  hyperplane section. It is obvious that each component of $Y$ has
  codimension at least one inside $X$. Therefore $\CH_0(X)_{\mathbb Q}$
  is supported in codimension one.
\end{proof}

Combining Proposition \ref{aus} and Lemma \ref{Chowlem} with Theorem
\ref{main} and Proposition \ref{support}, we obtain:

\begin{theorem}\label{S3} 
  Let $R$ be a normal, Cohen-Macaulay standard graded algebra over a subfield $k$ of 
  $\C$. Let $\mathfrak{m}$ be the
    irrelevant ideal of $R$. Suppose that $\Spec(R)-\{\mathfrak{m}\}$ has only
    rational singularities. Suppose moreover that there exists an
  $R$-module $M$ giving a NCR. 
 Then $\Spec(R)$ has only rational singularities.
\end{theorem}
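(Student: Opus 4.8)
The plan is to feed the K-theoretic finiteness of Theorem \ref{main} into the Chow-theoretic Proposition \ref{support}, and then convert the resulting support statement into the cohomological vanishing that characterises rational singularities of a cone. Throughout write $d=\dim R$ and $X=\Proj R$, so that $\dim X=d-1$. The cases $d\le 2$ are classical: for $d\le 1$ the ring $R$ is regular, while for $d=2$ the punctured spectrum of a normal ring is automatically regular and the assertion is Corollary \ref{rat}. So I assume $d\ge 3$. Since $\C$ is a separable extension of $k$, Lemma \ref{field} shows that $M_{\C}:=\C\otimes_k M$ gives an NCR of $R_{\C}:=\C\otimes_k R$, which remains normal, Cohen--Macaulay and standard graded (splitting $R_\C$ into its geometric components if necessary and arguing on each). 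Rational singularities are preserved and reflected by the faithfully flat, geometrically regular base change $\Spec R_{\C}\to\Spec R$, so both the hypothesis on $\Spec(R)-\{\mm\}$ and the desired conclusion transfer; hence I may assume $k=\C$.

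Localising at the irrelevant ideal, $M_{\mm}$ gives an NCR of the local ring $R_{\mm}$ by Lemma \ref{locom}, and by Proposition \ref{aus} the set $\NG(M)$ is closed of codimension at least $2$. Putting $Z=\NG(M)$ and $\CC=\CC_M$, Theorem \ref{main} gives that $\KG(R_{\mm})/\langle\CC\rangle$ is finitely generated. Via the Riemann--Roch isomorphism and the identifications of Theorem \ref{Chowfacts}(2), which are insensitive to localisation at $\mm$, this is exactly the finiteness input required to run Proposition \ref{support}; I conclude that $\CH_0(X)_{\Q}$ is supported in codimension $1$. Next, the hypothesis that $\Spec(R)-\{\mm\}$ has rational singularities implies, through the smooth surjection $\Spec(R)-\{\mm\}\to X$, that $X$ itself has rational singularities; choose a resolution $\pi:\widetilde X\to X$ with $\widetilde X$ smooth projective of dimension $d-1$. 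A localisation-sequence comparison of $X$ and $\widetilde X$ over the locus where $\pi$ is an isomorphism, together with the exceptional locus (of dimension at most $d-2$), shows that $\CH_0(\widetilde X)_{\Q}$ is again supported in dimension $d-2$. Lemma \ref{Chowlem} then yields $H^i(\widetilde X,\OO_{\widetilde X})=0$ for all $i>d-2$, and since $X$ has rational singularities, $H^{d-1}(X,\OO_X)=H^{d-1}(\widetilde X,\OO_{\widetilde X})=0$.

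It remains to promote this single vanishing to the statement that the cone $\Spec R$ has rational singularities. Describing a resolution of the cone as the total space $\mathrm{Tot}_{\widetilde X}(\pi^*\OO_X(-1))$, one sees that rationality of the vertex is equivalent to $H^i(X,\OO_X(n))=0$ for all $i>0$ and all $n\ge 0$. Because $R$ is Cohen--Macaulay, the comparison $H^i_{\mm}(R)_n\cong H^{i-1}(X,\OO_X(n))$ for $i\ge 2$ forces $H^j(X,\OO_X(n))=0$ for $1\le j\le d-2$ and every $n$, so only the top cohomology $H^{d-1}(X,\OO_X(n))\cong H^d_{\mm}(R)_n$ can survive. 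By graded local duality $H^d_{\mm}(R)$ is the Matlis dual of the torsion-free canonical module $\omega_R$, so multiplication by a nonzero element of $R_1$ acts injectively on $\omega_R$ and hence surjectively from degree $n$ to degree $n+1$ on $H^d_{\mm}(R)$. The vanishing of the degree-$0$ part $H^d_{\mm}(R)_0\cong H^{d-1}(X,\OO_X)$ therefore propagates to all $n\ge 0$, i.e. $a(R)<0$, giving $H^{d-1}(X,\OO_X(n))=0$ for every $n\ge 0$. Consequently $\Spec R$ has rational singularities.

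The main obstacle is this last step. The K-theoretic and Hodge-theoretic machinery only produces the single group $H^{d-1}(X,\OO_X)$, namely the degree-$0$ piece of the top local cohomology, whereas rationality of the cone requires the vanishing of $H^{d-1}(X,\OO_X(n))$ in \emph{all} non-negative twists. Bridging this gap is precisely the role of Cohen--Macaulayness, which both annihilates the intermediate cohomology and, through the torsion-freeness of $\omega_R$, renders the upward multiplication maps on $H^d_{\mm}(R)$ surjective, so that one vanishing propagates to infinitely many. A secondary point requiring care is the passage between the graded ring and its localisation at $\mm$ in Step~1, which is justified by the localisation-invariance of rational Chow groups in Theorem \ref{Chowfacts}(2).
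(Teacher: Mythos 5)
Your argument is correct and follows the same route as the paper: reduce to $k=\C$ via Lemma \ref{field}, combine Proposition \ref{aus}, Theorem \ref{main} and Proposition \ref{support} to show $\CH_0(X)_\Q$ is supported on a divisor, transfer this to a resolution of $X$, apply Lemma \ref{Chowlem} and rationality of $X$ to get the vanishing of the top cohomology of $\OO_X$, and finally invoke Watanabe's criterion. The only (harmless) divergence is in the last step: the paper promotes $H^{\dim X}(X,\OO_X)=0$ to all twists $\OO_X(n)$, $n\ge 0$, by the hyperplane-section exact sequence, whereas you obtain the same surjectivity of the maps between consecutive twists via graded local duality and torsion-freeness of $\omega_R$.
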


\begin{proof}
  By Lemma \ref{field} we can assume $k=\C$. Let $X = \Proj R$ and $d=\dim X$. By \cite[Theorem 2.2]{Wan2} we
  only need to show that $H^d(X,\mathcal O_X(n))=0$ for $n\geq 0$. It
  is actually enough to show that $H^d(X,\mathcal O_X)=0$. Indeed,
  letting $H$ be a hyperplane section of $X$, we have a short exact
  sequence for any $i$: $$\ses{\mathcal O_X(i-1)}{\mathcal
    O_X(i)}{\mathcal O_H(i)},$$ whose long exact sequence of
  cohomology gives exact sequences
$$H^d(X,\mathcal O_X(i-1))  \to H^d(X,\mathcal O_X(i))
\to H^d(X,\mathcal O_H(i)) =0 $$ as $\dim H = d-1$. Induction on $n$
shows that $H^d(X,\mathcal O_X(n))=0$ for $n\geq 0$, as desired.

Let's therefore show that $H^d(X, \mathcal O_X)=0$. By Proposition
\ref{aus}, $NG(M)$ is contained in a closed subscheme of $\Spec R$ of
codimension $2$ by normality of $R$. By Theorem \ref{main} and
Proposition \ref{support}, it follows that $\CH_0(X)_{\mathbb Q}$ is
supported on a divisor $D$, i.e.  $\CH_0(X-D)_{\mathbb Q} = 0$. Let $f
: \widetilde{X} \rightarrow X$ be a resolution of singularities of $X$
and let $\widetilde{D} = f^{-1}(D)$. Up to adding some components to
$D$, we may assume that $f$ induces an isomorphism
$\widetilde{X}-\widetilde{D} \rightarrow X-D$. Also we still have
$\CH_0(X-D)_{\mathbb Q} = 0$ and it follows that
$\CH_0(\widetilde{X}-\widetilde{D})_{\mathbb Q} = 0$, i.e. that
$\CH_0(\widetilde{X})_{\mathbb Q}$ is supported on a divisor.  By
Lemma \ref{Chowlem}, we then have $H^d(\widetilde{X}, \mathcal
O_{\widetilde{X}})=0$. Since $X$ has only rational singularities, we
see from the Leray-Serre spectral sequence that $H^d(X, \mathcal
O_X)=0$.
\end{proof}


A consequence of the first half of the proof of Theorem \ref{S3} and of
Lemma \ref{Chowlem} (applied in the case $l=0$) is the following

\begin{corollary}
  Let $X$ be a smooth projective variety over $\C$.  If $\dim_{\Q}
  \CH_0(X)_{\mathbb Q} <\infty$, then $X$ admits an embedding into a
  projective space whose homogeneous coordinate ring has only rational
  singularities.
\end{corollary}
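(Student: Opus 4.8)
The plan is to convert the finiteness of $\CH_0(X)_{\Q}$ into cohomology vanishing on $X$ via Lemma \ref{Chowlem}, and then to realize $X$ as the $\Proj$ of a sufficiently positive (hence well-behaved) graded ring, so that the criterion underlying the proof of Theorem \ref{S3} applies. First I would observe that $\dim_{\Q}\CH_0(X)_{\Q}<\infty$ forces $\CH_0(X)_{\Q}$ to be supported in dimension $0$: a finite $\Q$-basis of $\CH_0(X)_{\Q}$ involves only finitely many closed points $p_1,\dots,p_r$, so the finite subscheme $Z=\{p_1,\dots,p_r\}$ has $\CH_0(Z)_{\Q}\to\CH_0(X)_{\Q}$ surjective. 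Applying Lemma \ref{Chowlem} with $l=0$ then yields $H^i(X,\mathcal O_X)=0$ for all $i>0$. (If $\dim X=0$ the statement is trivial, so I assume $d:=\dim X\geq 1$.)

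Next I would choose the embedding. Fix any ample line bundle $L_0$ on $X$. For $m\gg 0$ the complete linear system $|L_0^m|$ embeds $X$ projectively normally into some $\mathbb P^N$, so that the homogeneous coordinate ring $A$ of this embedding coincides with the section ring $\bigoplus_{n\geq 0}H^0(X,L^n)$, where $L=L_0^m$; in particular $A$ is a normal standard graded $\C$-algebra with $\Proj A=X$ and $\mathcal O_X(1)=L$. After enlarging $m$, Serre vanishing gives $H^i(X,L^n)=0$ for all $i>0$ and all $n\geq 1$; combining this with Serre duality applied to the negative twists and with the untwisted vanishing from the first step shows that the intermediate local cohomology of $A$ vanishes, so that $A$ is moreover Cohen-Macaulay.

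Finally I would assemble the vanishing and conclude. Collecting the two sources gives $H^i(X,\mathcal O_X(n))=0$ for all $i>0$ and all $n\geq 0$ (the twisted part $n\geq 1$ from Serre vanishing, the case $n=0$ from Lemma \ref{Chowlem}). In particular $H^d(X,\mathcal O_X(n))=0$ for $n\geq 0$, which is exactly the input required by the first half of the proof of Theorem \ref{S3} (Wang's criterion) for the normal Cohen-Macaulay standard graded ring $A$; hence $\Spec A$, i.e. the homogeneous coordinate ring $A$, has only rational singularities. Equivalently, one may bypass Wang's theorem and argue directly: the blow-up of the vertex of $\Spec A$ is the total space of $L^{-1}$ over $X$, a resolution whose higher direct images are computed by $\bigoplus_{n\geq 0}H^i(X,L^n)$, so the displayed vanishing is precisely the statement that $\Spec A$ has rational singularities.

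I expect the main obstacle to be arranging the embedding so that the coordinate ring is genuinely normal, Cohen-Macaulay and equal to the section ring, since only then does the criterion of Theorem \ref{S3} apply; this is where sufficient positivity of $L$ (projective normality together with Serre vanishing for every positive twist, secured by passing to a high Veronese) is essential. The conceptual heart, by contrast, is the single clean implication of the first step, where finiteness of $\CH_0(X)_{\Q}$ is converted through Lemma \ref{Chowlem} into the untwisted vanishing $H^{i}(X,\mathcal O_X)=0$ for $i>0$ that seeds everything else.
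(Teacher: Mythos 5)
Your proposal is correct and follows the same route the paper intends: Lemma \ref{Chowlem} with $l=0$ converts $\dim_{\Q}\CH_0(X)_{\Q}<\infty$ (which gives support in dimension $0$ via the finitely many points appearing in a spanning set) into $H^i(X,\mathcal O_X)=0$ for $i>0$, and a sufficiently positive projectively normal embedding makes the homogeneous coordinate ring a normal Cohen--Macaulay standard graded algebra to which the first half of the proof of Theorem \ref{S3} applies. The care you take with Serre vanishing and Serre duality to secure Cohen--Macaulayness, and the alternative direct verification on the blow-up of the vertex, are precisely the details the paper leaves implicit.
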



In view of  Proposition \ref{aus} and Theorem \ref{Chowfacts} we ask:
\begin{question}\label{Grorat}
  Let $(R,\mm,k)$ be a Cohen-Macaulay complete local normal domain with $k$ an
  algebraically closed field. If $\KG( R)$ is finitely generated, must
  $\Spec(R)$ have only rational singularities?
\end{question}

Corollary \ref{rat} shows that the answer is yes in dimension $2$. Of
course, in higher dimensions the existence of desingularizations is
not known for positive or mixed characteristics, so one may need to
replace the condition of rational singularities with suitable concepts
such as being $F$-rational or pseudo-rational.

Our last example illustrates some subtlety involving Lemma \ref{Chowlem}.

\begin{example}\label{exCharles}
  Lemma \ref{Chowlem} might not be true over fields of characteristic
  $0$ whose transcendance degree over their prime subfield is not
  large enough. Indeed, consider a K3-surface $X$ over the algebraic
  closure of $\Q$. Then, $H^2(X,\mathcal O_X)$ is not zero but it is
  expected (as part of the Bloch-Beilinson conjectures) that
  $\CH_0(X)_{\Q}=\Q$.

  Provided such an example exists, it could potentially yield a
  negative example to Question \ref{Grorat}. Note that $\CH(X)_{\Q}$
  is a finite dimensional $\Q$-vector space (it is known that the rank
  of the Picard group of $X$ is finite). We can use an very ample line
  bundle on $X$ to embed $X$ into some projective space. Let $R$ be
  the homogeneous coordinate ring of such an embedding, thus
  $\dim_{\Q}\CH(R)_{\Q}<\infty$. But if $R$ has rational
  singularities, then $H^2(X, \mathcal O_X)=0$ (cf.  \cite[Theorem
  2.2]{Wan2} and \cite{Wan1}).
\end{example}

\section*{Acknowledgments}

The first author thanks Joseph Lipman and Karl Schwede for some helpful conversations on rational singularities. The second author thanks Michael Wemyss for valuable discussions.
{This work started when the second and third authors visited University of Kansas in August, 2011. They are grateful for the hospitality.}

\end{document}